\newtheorem{theorem}{Theorem}[section]
\newtheorem{lemma}[theorem]{Lemma}
\newtheorem{proposition}[theorem]{Proposition}
\theoremstyle{definition}
\newtheorem{definition}[theorem]{Definition}
\newtheorem{example}[theorem]{Example}
\newtheorem{remark}[theorem]{Remark}
\def\cX{{\cal X}}
\numberwithin{equation}{section}
\begin{document}
\makeatletter

\title{\Large {\bf A PIECEWISE CONVEXIFICATION METHOD FOR NON-CONVEX MULTI-OBJECTIVE OPTIMIZATION PROGRAMS WITH BOX CONSTRAINTS}}

\author{QIAO ZHU, LI PING TANG\thanks{Corresponding Author}, AND XIN MIN YANG}

%\date{\small\it Dedicated to Professor Guang-Ya Chen on the occasion of his 80th birthday.}

\maketitle
\vspace*{0mm}

\vspace{2mm}

\footnotesize{
\noindent\begin{minipage}{14cm}
{\bf Abstract:}
This paper presents a piecewise convexification method for solving non-convex multi-objective optimization problems with box constraints. Based on the ideas of the $\alpha$-based Branch and Bound (${\rm \alpha BB}$) method of global optimization and the interval subdivision, a series of convex relaxation sub-multiobjective problems for this non-convex multi-objective optimization problem are firstly obtained, and these sub-problems  constitute a piecewise convexification problem of the original problem on the whole box. We then construct the (approximate, weakly) efficient solution set of this piecewise convexification problem, and use these sets to approximate the globally (weakly) efficient solution set of the original problem. Furthermore, we propose a piecewise convexification algorithm and show that this algorithm can also obtain approximate globally efficient solutions by calculating a finite subset of the efficient solution set of the multi-objective convex sub-problems only. Finally, its performance is demonstrated with various test instances.
\end{minipage}
 \\[5mm]

\noindent{\bf Keywords:} {multiobjective optimization, $\alpha {\rm BB}$ method, efficient solution, weakly efficient solution, piecewise convexification, approximation}\\
\noindent{\bf Mathematics Subject Classification:} {90C26, 90C29, 90C30}

\hbox to14cm{\hrulefill}\par

%%%%%%%%%%%%%%%%%%%%%%%%%%%%%%%%%%%%%%%%%%%%%%%%%%%%%%%%%%%%%%%%%%%%%%%
%%%%%%%%%%%%%
%%%%%%%%%%%%%%%%%%%%%%%%%%%%%%%%%%%%%%%%%%%%%%%%%%%%%%%%%%%%%%%%%%%%%%%%%%%%%%%%

\section{Introduction}

Multi-objective optimization plays an important role in applications and has been extensively researched in theory and practice. We note that many practical multi-objective optimization problems (MOPs) have non-convex structures, for instance, in the fields of machine learning  \cite{jin2006multi,nguyen2019multiple}, portfolio optimization \cite{babaei2015multi},
and communication networks \cite{omar2017multiobjective}, to name but a few. And how to find (approximate,weakly) efficient solutions of the non-convex multiobjective optimization problem is always an attractive research topic.
%\cite{jin2006multi,nguyen2019multiple,tian2020solving}
%\cite{anagnostopoulos2011multiobjective,babaei2015multi}

Over the past few years, a variety of effective methods for solving MOPs  have been developed.
% such as gradient-based algorithms \cite{bento2018weighting,drummond2004projected,fliege2000steepest,gebken2021efficient,neto2013subgradient,tanabe2019proximal}, the scalarization method  \cite{akbari2018revised,burachik2017new,ehrgott2005multicriteria, Longqiang2000,liping2020pascoletti,xinmin2020linear,kequan2019complete}.
 Unfortunately, most of the current algorithms to tackle non-convex MOPs focus on the locally optimal solutions, and finding global solutions remains a challenging task.  As we all know, the convexity assumption is a basic one in many methods which can guarantee the locally optimal solution of convex programming is eventually a global optimal solution.
%Obviously, the most direct idea is to convert this non-convex problem into a convex one, see \cite{li2001convexification,sun2007convexification,wu2005convexification,wu2005class}. The conditions required, however, for this equivalent convex transformation are usually too strict, for example, the gradient function must be monotonic to each variable, which makes this method unsuitable for a large number of practical non-convex optimization problems.
In addition, it is not necessary to find a global exact solution from the perspective of practical applications and also acceptable to find a global approximate solution with a predefined quality due to the high cost of seeking a global solution.
%Consequently, huge amounts of algorithms are proposed to explore approximate global solutions for MOPs, see \cite{benson2012outcome,ehrgott2011approximation,ghazli2020optimizing,liu2018primal,monjezi2020inexact,shao2014objective, zhang2007moea}.
%\cite{benson2012outcome,deb2002fast,ehrgott2011approximation,ghazli2020optimizing,liu2018primal,monjezi2020inexact,shao2014objective, zhang2007moea}.
%Among these global approximate algorithms,
The $\rm{\alpha BB}$ method is a well-known convex relaxation approach to explore approximate global solutions for non-convex MOPs, see  \cite{eichfelder2021general,fernandez2009obtaining,niebling2019branch,rocktaschel2020branch}.
One of the key characteristics of this method is that the maximum distance between the original non-convex function and its respective convex relaxation goes to zero as the size of the rectangle domains approaches zero. Therefore, a idea of  piecewise convexification, that is, convex relaxation on each subinterval, is naturally proposed to handle non-convex MOPs with box constraints, see \cite{eichfelder2021general,niebling2019branch}. Most of these literatures focus on algorithm design, rather than analyzing the piecewise convexification method.

%As we mentioned earlier,
%%the equivalent convex transformation on the entire feasible set is usually not easy to implement and
%the under-estimator convex relaxation constructed over the whole box by using the ${\rm\alpha BB}$ method is always less tight than the under-estimator constructed over any sub-box. .
The main work of this paper is to analyze this piecewise convexification method, which consists of a series of convex relaxation sub-problems of the original non-convex MOPs on different sub-interval. Combining the ${\rm \alpha BB}$ method with the interval division, we obtain a series of convex relaxation sub-multiobjective problem to piecewise approximate the original multi-objective problem on the whole interval, which can be called the piecewise convexification problem(PCP). Then, the (approximate, weakly) efficient solution set of PCP is constructed by using the partial order relationship to compare all optimal solutions of the convex relaxation sub-multiobjective problems on each sub-interval.
%All subproblems in this method are convex and can be solved by the weighted sum method.
%According to a theoretical property of a weakly efficient solution for convex multi-objective optimization,
We prove that the (approximate, weakly) efficient solution set of PCP can be used to approximate the global approximate (weakly) efficient solutions set of  non-convex optimization problems. These theoretical results are well-structured. In the numerical experiments, the defined solution set of PCP is slightly adjusted according to the solving method, and it is proved that the algorithm can find the globally approximate optimal solution by calculating finite sub-sets of the multi-objectives sub-problems.

%This method that divides the whole box and performs convex relaxation of the original problem on each sub-box as well as approximates the solution set of the original problem by solving a series of convex sub-problems can be called the piecewise convexification method.

This paper is organized as follows. Section 2 summarizes some key notions and properties for MOPs. In addition, we recall the $\alpha{\rm BB}$ method and the interval division. In Section 3, a piecewise convexification method is discussed  for non-convex MOPs with box constraints. And we establish its convergence of (approximate) efficient solution set. Furthermore, we design a new algorithm that generates approximate (weakly) efficient solutions and  present its convergence in Section 4. Finally, we apply the algorithm to several test instances in Section 5.

\section{Preliminaries}
In this paper, we set $[p]:=\{1,\cdots,p\}$. Let $y^1, y^2\in \mathbb{R}^p$,  we shall use the notation $y^{1}\prec y^{2}$ to indicate $y_{i}^{1}<y_{i}^{2}$ for all $i\in[p]$. Moreover, $y^{1}\preceq y^{2}$ indicates $y_{i}^{1}\leq y_{i}^{2}$ for all $i\in [p]$ and $y^{1}\neq y^{2}$,  whereas  $y^{1}\leqq y^{2}$ shows $y_{i}^{1}\leq y_{i}^{2}$ for all $i\in[p]$. Let $\mathbb{R}_{++}^p=\{y\in\mathbb{R}^p: y\succ 0\}$, and $ \mathbb{R}_{+}^p=\{y\in\mathbb{R}^p: y\geqq 0\}$.

{\bf \subsection{Basic Definitions and Properties}}
In this article, we consider the following multi-objective optimization problem:
\begin{eqnarray*}
(\text{MOP})~~&&\min~~f(x)=(f_{1}(x),\cdots, f_{p}(x))\\
&&{\rm s. t.}~  x\in X,
\end{eqnarray*}
where $f_i: \mathbb{R}^m\rightarrow \mathbb{R}$ is twice continuously differentiable function for any $i\in[p]$, which is non-convex on $X$ in generally and $X:=[a, b]:=\prod\limits_{i=1}^{m}[a_i,b_i]$ satisfied $a\prec b$ with $a=(a_1,\cdots,a_m)$ and $b=(b_1,\cdots,b_m)$. Thus $x\in X$ implies that $x_{i}\in [a_i,b_{i}]$ for any $i$. Then we give the following concept of the (weakly) efficient solution with respect to (MOP).
\begin{definition}{\cite{ehrgott2005multicriteria}} A point $\hat{x}\in X$ is called (weakly) efficient solution for (MOP), if there is no other $x\in X$ such that $(f(x)\prec f(\hat{x}))$ $f(x)\preceq f(\hat{x})$.
\end{definition}
The set of all (weakly) efficient solutions for (MOP) is denoted by
\begin{align*}
&\cX_{E}:=\left\{x\in X: ~~\nexists~ y\in X~~ s.t.~~f(y)\preceq f(x)\right\},\\
&\cX_{wE}:=\left\{x\in X: ~~\nexists~ y\in X~~ s.t.~~f(y)\prec f(x)\right\}.\end{align*}
If $\hat{x}$ is an (weakly) efficient solution, then $f(\hat{x})$ is called (weakly) nondominated point. Next, we recall the definition of convex multi-objective optimization problem and its a basic property.
\begin{definition}{\cite{pardalos2017non}}
If $X$ is convex, and $f_{i}$ is convex function for any $i\in[p]$, then (MOP) is called convex multi-objective optimization problem.
\end{definition}
We will summarize some basic results for weighted sum method, which are needed in next sections.
\begin{lemma} {\cite{ehrgott2005multicriteria}}\label{lemma:CC} Suppose that $\hat{x}$ is an optimal solution of the weighted sum optimization problem:
\begin{eqnarray*}%\label{2021021701}
\min\limits_{x\in \mathcal{X}}\sum_{k=1}^{p}\lambda_{k}f_{k}(x),
\end{eqnarray*}
with $\lambda\in\mathbb{R}_{+}^{p}\setminus\{0\}$, then $\hat{x}\in \cX_{wE}$. Furthermore, if $\lambda\in\mathbb{R}_{++}^{p}$, then $\hat{x}\in \cX_{E}$.
\end{lemma}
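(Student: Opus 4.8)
The plan is to prove both assertions by contradiction, exploiting the fact that the order relations $\prec$ and $\preceq$ interact with nonnegative weights in a controlled way. First I would treat the weakly efficient case. Assume $\hat{x}$ is optimal for the weighted sum problem with $\lambda\in\mathbb{R}_{+}^{p}\setminus\{0\}$, and suppose toward a contradiction that $\hat{x}\notin\cX_{wE}$. By the definition of $\cX_{wE}$, there exists $y\in X$ with $f(y)\prec f(\hat{x})$, i.e. $f_{k}(y)<f_{k}(\hat{x})$ for every $k\in[p]$. Because $\lambda_{k}\geq 0$ for all $k$ and $\lambda_{j}>0$ for at least one index $j$, multiplying each strict inequality by the corresponding weight and summing gives
\begin{equation*}
\sum_{k=1}^{p}\lambda_{k}f_{k}(y)=\sum_{k:\lambda_{k}>0}\lambda_{k}f_{k}(y)<\sum_{k:\lambda_{k}>0}\lambda_{k}f_{k}(\hat{x})=\sum_{k=1}^{p}\lambda_{k}f_{k}(\hat{x}),
\end{equation*}
where the strict inequality is guaranteed by the presence of at least one positive weight $\lambda_{j}$. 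This contradicts the optimality of $\hat{x}$, so $\hat{x}\in\cX_{wE}$.

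Next I would handle the efficient case under the stronger hypothesis $\lambda\in\mathbb{R}_{++}^{p}$. Again arguing by contradiction, suppose $\hat{x}\notin\cX_{E}$. Then there is some $y\in X$ with $f(y)\preceq f(\hat{x})$, meaning $f_{k}(y)\leq f_{k}(\hat{x})$ for all $k\in[p]$ together with $f(y)\neq f(\hat{x})$; the latter forces a strict inequality $f_{j}(y)<f_{j}(\hat{x})$ in at least one coordinate $j$. Since every $\lambda_{k}>0$, each term satisfies $\lambda_{k}f_{k}(y)\leq\lambda_{k}f_{k}(\hat{x})$, and the $j$-th term is strict, so summing yields $\sum_{k}\lambda_{k}f_{k}(y)<\sum_{k}\lambda_{k}f_{k}(\hat{x})$, again contradicting optimality. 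Hence $\hat{x}\in\cX_{E}$.

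The only point requiring care, rather than a genuine obstacle, is matching the strictness of the weighted-sum inequality to the hypothesis on $\lambda$ in each case. For weak efficiency the domination $f(y)\prec f(\hat{x})$ is strict in every coordinate, so a single positive weight already propagates strictness to the sum; this is precisely why $\lambda\in\mathbb{R}_{+}^{p}\setminus\{0\}$ suffices. For efficiency the domination $f(y)\preceq f(\hat{x})$ is strict in possibly only one coordinate, so one must ensure the strict coordinate is not annihilated by a zero weight and that the remaining weakly dominated coordinates do not reverse the inequality; requiring $\lambda\in\mathbb{R}_{++}^{p}$ exactly accomplishes this. No convexity or differentiability of the $f_{i}$ is needed, which is worth noting since the lemma will later be applied to the convex relaxation sub-problems.
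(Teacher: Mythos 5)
Your proof is correct and is the standard contradiction argument; the paper itself offers no proof of this lemma, merely citing Ehrgott's \emph{Multicriteria Optimization}, where essentially the same weighted-sum contradiction (propagating strictness through nonnegative versus strictly positive weights) is used. Nothing further is needed.
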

The above lemma shows that any optimal solution of the weighted
sum optimization problem  is the (weakly) efficient solution of (MOP) without any assumption. However, the reverse is not necessarily true without some convexity assumption, see the following lemma.
\begin{lemma}{\cite{ehrgott2005multicriteria}}\label{lemma:BB}
For convex MOPs, any weakly efficient solution $\bar{x}$, there exists $\bar{\lambda}\in\mathbb{R}^p_{+}\setminus\{0\}$ such that $\bar{x}$ is the optimal solution of the weighted sum optimization problem $\sum\limits_{k=1}^{p}\bar{\lambda}_{k}f_{k}(x)$.
\end{lemma}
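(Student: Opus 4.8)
The plan is to pass from the decision space to the objective space and reduce the claim to a single application of the separating hyperplane theorem for convex sets. Concretely, I would encode the weak efficiency of $\bar{x}$ as the statement that a certain convex set in $\mathbb{R}^p$ fails to contain the point $f(\bar{x})$, and then read off the desired multiplier $\bar{\lambda}$ as the normal vector of a separating hyperplane.

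First I would introduce the set
\begin{equation*}
C := f(X) + \mathbb{R}^p_{++} = \{\, y \in \mathbb{R}^p : f(x) \prec y \text{ for some } x \in X \,\}.
\end{equation*}
The key structural step is to show that $C$ is convex, and this is exactly where the convexity hypotheses enter: given $y^1 = f(x^1) + r^1$ and $y^2 = f(x^2) + r^2$ with $r^1, r^2 \succ 0$ and $\theta \in [0,1]$, convexity of $X$ gives $\tilde{x} := \theta x^1 + (1-\theta)x^2 \in X$, and convexity of each $f_k$ gives $f(\tilde{x}) \leqq \theta f(x^1) + (1-\theta) f(x^2)$; combining this with $\theta r^1 + (1-\theta) r^2 \succ 0$ yields $\theta y^1 + (1-\theta) y^2 \succ f(\tilde{x})$, so the convex combination again lies in $C$. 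Next I would observe that $f(\bar{x}) \notin C$, for otherwise there would exist $x \in X$ with $f(x) \prec f(\bar{x})$, contradicting $\bar{x} \in \cX_{wE}$ (this is precisely the defining property of the weakly efficient set).

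With these two facts in hand, I would invoke a standard separation theorem for the convex set $C$ and the point $f(\bar{x}) \notin C$, obtaining $\bar{\lambda} \in \mathbb{R}^p \setminus \{0\}$ with $\bar{\lambda}^\top y \geq \bar{\lambda}^\top f(\bar{x})$ for every $y \in C$. Two short arguments then finish the proof. To see that $\bar{\lambda} \geqq 0$, I would use that $C$ is upward closed, i.e. $C + \mathbb{R}^p_{+} = C$: if some component $\bar{\lambda}_j$ were negative, pushing $y \in C$ to $y + t e_j \in C$ as $t \to +\infty$ would drive $\bar{\lambda}^\top y$ to $-\infty$, contradicting the lower bound; together with $\bar{\lambda} \neq 0$ this gives $\bar{\lambda} \in \mathbb{R}^p_{+} \setminus \{0\}$. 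Finally, for optimality I would fix any $x \in X$ and note $f(x) + \varepsilon \mathbf{1} \in C$ for all $\varepsilon > 0$, so $\bar{\lambda}^\top (f(x) + \varepsilon \mathbf{1}) \geq \bar{\lambda}^\top f(\bar{x})$; letting $\varepsilon \to 0^+$ yields $\sum_{k=1}^{p} \bar{\lambda}_k f_k(x) \geq \sum_{k=1}^{p} \bar{\lambda}_k f_k(\bar{x})$, i.e. $\bar{x}$ solves the weighted sum problem.

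I expect the main obstacle to be the convexity of $C$ rather than the separation step itself: verifying that the Minkowski sum $f(X) + \mathbb{R}^p_{++}$ is convex is the only place the hypotheses on $f$ and $X$ are genuinely used, and getting the inequalities together with the membership $\tilde{x} \in X$ correct is what makes the separation argument legitimate. It is also worth noting that the construction only yields $\bar{\lambda} \geqq 0$ and not $\bar{\lambda} \succ 0$, which is consistent with the statement: for a merely weakly efficient point one cannot in general guarantee strictly positive weights, matching the asymmetry already visible in Lemma~\ref{lemma:CC}.
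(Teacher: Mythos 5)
Your proof is correct and complete: the paper itself gives no argument for this lemma (it is quoted directly from \cite{ehrgott2005multicriteria}), and your separation argument --- convexity of $f(X)+\mathbb{R}^p_{++}$, exclusion of $f(\bar{x})$, separation, nonnegativity of the normal via upward closedness, and the limit $\varepsilon\to 0^{+}$ --- is exactly the standard proof given in that reference. No gaps; in particular you are right that only $\bar{\lambda}\in\mathbb{R}^p_{+}\setminus\{0\}$, not $\bar{\lambda}\succ 0$, can be extracted for weakly efficient points.
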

Lemmas \ref{lemma:CC} and \ref{lemma:BB} indicate that under convexity assumption, every weakly efficient solution is  obtained by solving a weighted sum problem. Moreover, we establish a crucial property of (weakly) efficient solution, which will be applied to develop convergence properties of our piecewise convexification method.
\begin{lemma}\label{lemma:2.6} Let $X=[a, b]\subset\mathbb{R}^{m}$ and $f(x)$ be a continuous function, then

(i) For any $x\notin\cX_{E}$ there is ${y}_{x}\in\cX_{E}$ such that $f({y}_{x})\preceq f({x})$;

(ii) For any $x\notin \cX_{wE}$, there exists $y_{x}\in \cX_{wE}$ such that $f(y_{x})\prec f(x)$.
\end{lemma}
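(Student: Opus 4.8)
The plan is to exploit the compactness of the box $X$ together with the continuity of $f$, combined with a weighted-sum scalarization in the spirit of Lemma~\ref{lemma:CC}. For part (i), fix $x\notin\cX_{E}$ and consider the lower level set $S:=\{y\in X: f(y)\leqq f(x)\}$. Since $x\in S$ this set is nonempty, and as the intersection of $X$ with the closed sets $f_k^{-1}((-\infty,f_k(x)])$ it is closed and bounded, hence compact. I would then minimise the equally weighted sum $g(y):=\sum_{k=1}^{p}f_k(y)$ over $S$; by the Weierstrass theorem a minimiser $y_x\in S$ exists.

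The crucial step is to argue that this minimiser is \emph{globally} efficient, not merely efficient relative to $S$. Suppose some $z\in X$ satisfied $f(z)\preceq f(y_x)$. Because $y_x\in S$ gives $f(y_x)\leqq f(x)$, I obtain $f(z)\leqq f(y_x)\leqq f(x)$, so that $z\in S$ as well; moreover $f(z)\preceq f(y_x)$ forces $g(z)<g(y_x)$, contradicting minimality. Hence $y_x\in\cX_{E}$. It remains to strengthen $f(y_x)\leqq f(x)$ to $f(y_x)\preceq f(x)$. For this I use that $x\notin\cX_{E}$ already supplies a point $w\in X$ with $f(w)\preceq f(x)$, whence $w\in S$ and $g(w)<g(x)$; minimality then yields $g(y_x)\le g(w)<g(x)$. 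Combined with the componentwise inequalities $f_k(y_x)\le f_k(x)$, the strict inequality $g(y_x)<g(x)$ rules out $f(y_x)=f(x)$, so that $f(y_x)\preceq f(x)$, completing (i).

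For part (ii) I would reduce to (i) via the inclusion $\cX_{E}\subseteq\cX_{wE}$, which follows at once from the definitions since $f(y)\prec f(x)$ implies $f(y)\preceq f(x)$. Given $x\notin\cX_{wE}$, there is $w\in X$ with $f(w)\prec f(x)$. If $w\in\cX_{E}$, then $y_x:=w$ already works. Otherwise $w\notin\cX_{E}$, and part (i) furnishes $y_x\in\cX_{E}\subseteq\cX_{wE}$ with $f(y_x)\preceq f(w)$; then $f_k(y_x)\le f_k(w)<f_k(x)$ for every $k\in[p]$, i.e. $f(y_x)\prec f(x)$. The main obstacle I anticipate is the upgrade from ``efficient over $S$'' to ``efficient over all of $X$''; this hinges on the downward-closedness of the level set $S$ (any point dominating an element of $S$ remains in $S$), which is precisely what makes the restricted scalarization argument valid globally.
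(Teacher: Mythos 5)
Your argument is correct and complete. Note, though, that the paper does not actually carry out a proof of this lemma: it simply cites Theorem 2.21 in Ehrgott and Proposition 4.10 in Luc, which are precisely the domination-property results that you reprove from scratch. What you do differently is give a self-contained Weierstrass argument: minimise the equally weighted sum $g=\sum_{k=1}^{p}f_k$ over the compact, downward-closed section $S=\{y\in X: f(y)\leqq f(x)\}$, use the downward-closedness of $S$ to promote ``efficient relative to $S$'' to ``efficient in all of $X$'', and use the strict drop $g(y_x)\le g(w)<g(x)$ supplied by a point $w$ dominating $x$ to upgrade $f(y_x)\leqq f(x)$ to $f(y_x)\preceq f(x)$. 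Both of the delicate points you flag are handled correctly, and your reduction of part (ii) to part (i) via the inclusion $\cX_{E}\subseteq\cX_{wE}$ in fact proves slightly more than asserted, since the strictly dominating point can be taken efficient rather than merely weakly efficient. What the citation buys the paper is brevity; what your proof buys is independence from external existence theorems and an explicit construction of $y_x$ by scalarization, which is in the same spirit as the weighted-sum machinery (Lemma \ref{lemma:CC}) already used elsewhere in the paper.
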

\begin{proof}
The proof is easy by Theorem 2.21 in \cite{ehrgott2005multicriteria} and Proposition 4.10 in \cite{Luc1989Lecture}.
\end{proof}

In the following, we give a brief overview of some ideas of the $\alpha{\rm BB}$ method and the interval division.
{\bf \subsection{The $\alpha{\rm BB}$ Method}}
\noindent
The $\alpha{\rm BB}$ method is a branch and bound method for solving non-convex problems in global optimization  \cite{adjiman1998globalII,adjiman1998global,androulakis1995alphabb}, which constructs a convex lower estimation function.

For $X=[a,b]=\prod\limits_{i=1}^{m}[a_{i},b_{i}]$ with $a\prec b$. Let $f: X\rightarrow \mathbb{R}$ be a real-valued twice continuously differentiable function. A relaxation function $F: X\rightarrow \mathbb{R}$ of $f$ by the idea of the $\alpha$BB method is  defined in \cite{androulakis1995alphabb} as follows:
\begin{align*}
F(x)=f(x)+\frac{\alpha}{2}\sum\limits_{i=1}^{m}(a_{i}-x_{i})(b_{i}-x_{i}),
\end{align*}
where $\alpha$ is a parameter and $x_{i}\in [a_{i}, b_{i}]$ for any $ i$.  $\frac{\alpha}{2}\sum\limits_{i=1}^{m}(a_{i}-x_{i})(b_{i}-x_{i})$ can be regarded as the error term between the relaxation function $F$ and $f$, which is completely determined by the interval where the variable is located. In particular, it is easy to verify that $F$ is convex on $X$ from \cite{adjiman1998global} when taking
\begin{align}\label{2022032901}
\alpha\geq \max\{0,-\min\limits_{x\in X}\lambda_{min}(x)\},
\end{align}
where $\lambda_{min}(x)$ is the minimum eigenvalue of Hessian matrix to $f$ on $x$, which can be calculated by the interval arithmetic method \cite{floudas2013deterministic,hansen2003global}.
\vspace{0.2cm}

{\bf\subsection{Interval Division}}
\noindent
From the form of the ${\rm \alpha BB}$ method, dividing the interval can better approximate the original function.   Analogously to \cite{eichfelder2016modification}, we also use the longest edge of the interval to subdivide it, that is, for a given box $X=[a,b]=\prod\limits_{i=1}^{m}[a_{i},b_{i}]$,  the longest edge of $X$ is defined by
\begin{align*}
l=\min\left\{i\in[m]:~i\in\arg\max\limits_{j\in[m]} (b_{j}-a_{j})\right\},
\end{align*}
and according to $l$ to subdivide $X$, two subsets of $X$ have the following form:
\begin{align*}%\label{eqna:(25)}
Y^{1}=\prod\limits_{i=1,i\neq l}^{m}[a_{i}, b_{i}]\times \left[a_{l}, \frac{a_{l}+b_{l}}{2}\right],~~
Y^{2}=\prod\limits_{i=1,i\neq l}^{m}[a_{i}, b_{i}]\times \left[ \frac{a_{l}+b_{l}}{2}, b_{l}\right].%\label{eqna:(26)}.
\end{align*}
Clearly, $Y^{i}\subset X (i=1,2)$ and $X=Y^{1}\cup Y^{2}$. As shown in \cite{eichfelder2016modification}, the interval division operator $\text{ID}: X\times \mathbb{N}_0\rightarrow 2^{X}$ is recursively defined by
\begin{align*}
&\text{ID}(X,0):=\{X\},~~ \text{ID}(X,1):=\{Y^{1},Y^{2}\}, \\ &\text{ID}(X,t):=\text{ID}(Y^{1},t-1)\cup \text{ID}(Y^{2},t-1),
\end{align*}
where $t$ is the interval division times of $X$. Apparently for any $t\in\mathbb{N}_0$ it holds that $|\text{ID}(X,t)|=2^{t}$. Thus, for simplicity of presentation, let
\begin{align*}
\mathbf{Y}^{t}:=\text{ID}(X,t)=\{Y^{1},Y^{2},\cdots,Y^{2^t}\},
\end{align*}
where $Y^{k_t}\subset X~(k_{t}\in[2^t])$ and $\bigcup\limits_{{k_t}=1}^{2^t}Y^{k_t}=X$.

In this paper, $\mathbf{Y}^t$ is called a subdivision of $X$. For simplicity,  we may abbreviate the subinterval $Y^{k_t}$ of $X$ as $Y^{k_t}=[a^{k_t},b^{k_t}]=\prod\limits_{i=1}^{m}[a_{i}^{k_t}, b_{i}^{k_t}]$ and define the length of the subdivision $\mathbf{Y}^t$ of $X$ by
\begin{eqnarray*}
|T(\mathbf{Y}^{t})|=\max\limits_{k_t\in [2^t]}\left\{\|a^{k_t}-b^{k_t}\|_{2}^{2}\right\}=\max\limits_{k_t\in [2^t]}\left\{\sum\limits_{i=1}^{m}(b_{i}^{k_t}-a_{i}^{k_t})^{2}\right\}.
\end{eqnarray*}
Obviously, $|T(\mathbf{Y}^{t_1})|\leq |T(\mathbf{Y}^{t_2})|$ when $t_1>t_2$.  Combining with the ${\rm \alpha BB}$ method, the smaller length of the subdivision $\mathbf{Y}^t$, the smaller error term, that is,  $F$ is closer to the original function $f$. Since $t$
uniquely determines the subdivision $\mathbf{Y}^{t}$ of X, thus for simplicity $|T(\mathbf{Y}^{t})|$ can be abbreviated as $|T(t)|$. Moreover, it follows from Lemma 5 in \cite{eichfelder2016modification} that $|T(t)|\leq \|b-a\|_{2}^{2}\cdot\left(1-\frac{3}{4m}\right)^t$, which implies that $|T(t)|\rightarrow 0$ as $t\rightarrow \infty$.
\vspace{0.2cm}

\section{Piecewise Convexification of (MOP)}
In this section, combining the ${\rm \alpha BB}$ method and the interval division, a piecewise convexification problem for ${\rm(MOP)}$ is obtained. Then we design its (approximate, weakly) efficient solution sets, which is our key tools to approximate the approximate (weakly) efficient solution set of ${\rm(MOP)}$.

{\bf\subsection{Piecewise Convex Relaxation for (MOP)}}
\noindent
In this subsection, we state with an overview of the convex relaxation for (MOP) on each sub-interval according to the $\alpha{\rm BB}$ method, and then discuss some properties of parameters for this relaxation problem. Finally, the relationship between the sets of solutions for this problem is proposed.

For any given $t\in\mathbb{N}_0$ it uniquely determines a subdivision $\mathbf{Y}^t$ of $X$.  Similar to \cite{niebling2019branch},  the $\alpha {\rm BB}$ convex relaxation for (MOP) on subset $Y^{k_t}\in\mathbf{Y}^t$ for any $k_t\in\{1,\cdots,2^t\}$ is constructed as follows
\begin{eqnarray*}
({\rm CMOP})^{k_t}~~&&\min~~f^{k_t}(x)=(f_{1}^{k_t}(x),\cdots,  f_{p}^{k_t}(x))\\\label{eqnarray:1-12-2}
&&{\rm s. t.}~  x\in Y^{k_t},\notag
\end{eqnarray*}
where\vspace{-1mm}
\begin{eqnarray}\label{eqn:(7)}
f_{j}^{k_t}(x)=f_{i}(x)+\frac{\alpha_{j}^{k_t}}{2}\sum\limits_{i=1}^{n}(a_{i}^{k_t}-x_{i})(b_{i}^{k_t}-x_{i}), j=[p],
\end{eqnarray}
$\alpha_{j}^{k_t}=\max\{0, -\min\limits_{x\in Y^{k_t}} \lambda_{\min}^{j}(x)\}$ and $\alpha_{j}=\max\{0, -\min\limits_{x\in X}\lambda_{\min}^{j}(x)\}$. Here $\lambda_{\min}^{j}(x)$ indicates the minimum eigenvalue of $\nabla^2 f_j(x), j=[p]$. The definition of $\alpha_{j}^{k_t}$ indicates that $f_{j}^{k_t}$ is convex on $Y^{k_t}$. Thus, $({\rm CMOP})^{k_t}$ is regarded as the local convex relaxation sub-multiobjective optimization problem of (MOP) on $Y^{k_t}\subset X$. All local convex relaxation sub-multiobjective optimization problems for the subdivision $\mathbf{Y}^{t}$ are constructed the piecewise convexification problem of (MOP) on the whole set $X$.
%This way is also called the piecewise convexification method.

From the definitions of $\alpha_{j}^{k_t}$ and $\alpha_{j}$, it follows that $\alpha_{j}^{k_t}\leq \alpha_{j}$ for any $j=[p]$ and $k_t\in[2^t]$.
This indicates that a relaxation function of piecewise convexification is closer to the original function than that direct convexification on the whole interval by using the ${\rm \alpha BB}$ method, which can be illustrated in  Figure \ref{fig:0-1}.
\begin{figure}[h]
\captionsetup{font={scriptsize}}
\centering
\includegraphics[width=10.5cm]{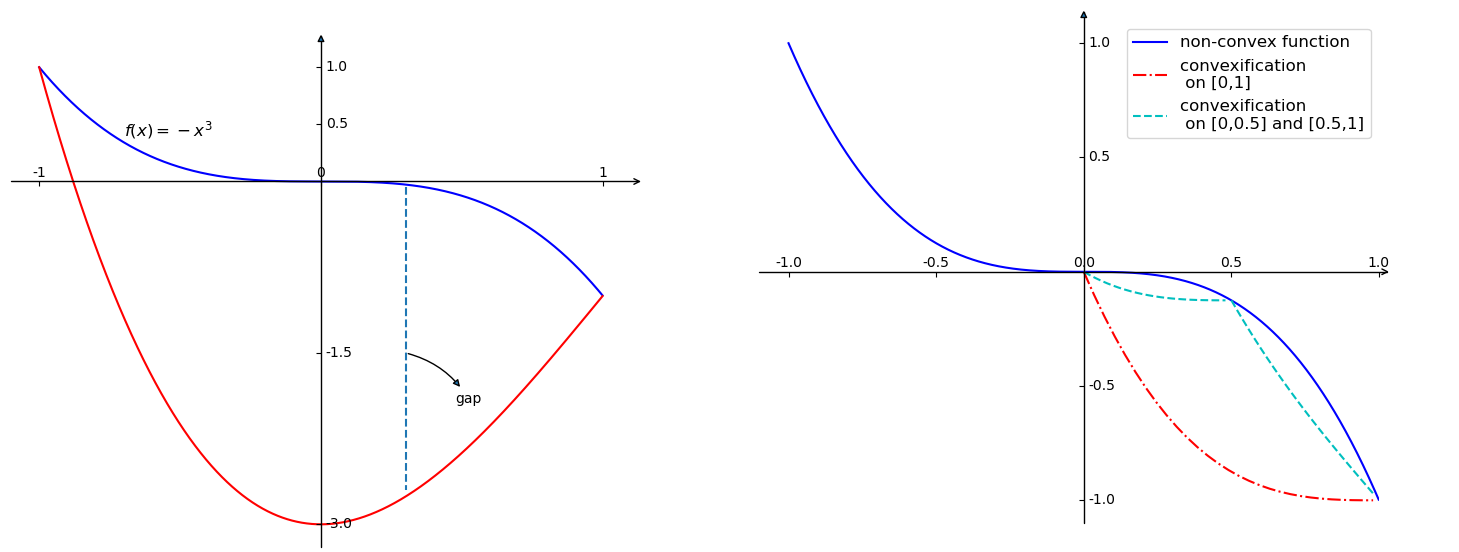}
\caption{\scriptsize{The process of convexification relaxation for $y=-x^3$ in $[-1,1]$. }}\label{fig:0-1}
\end{figure}
%\begin{figure}[h]
%\captionsetup{font={scriptsize}}
%\subfigtopskip=2pt %设置子图与上面正文或别的内容的距离
%\subfigbottomskip=0.0cm %设置第二行子图与第一行子图的距离，即下面的头与上面的脚的距离
%\subfigcapskip=0pt %设置子图与子标题之间的距离
%\subfigure[\scriptsize{original non-convex function}]
%{
%\begin{minipage}[t]{0.5\linewidth}
%\includegraphics[width=5cm]{fig1.png}
%%\caption{fig1}
%\end{minipage}%
%}%
%\subfigure[\scriptsize{convexification function}]{
%\begin{minipage}[t]{0.5\linewidth}
%\includegraphics[width=5.5cm]{fig2.png}
%%\caption{fig2}
%\end{minipage}%
%}%
%\centering
%\caption{\scriptsize{The process of convexification relaxation for $y=-x^3$ in $[-1,1]$. }} \label{fig:0-1}
%\end{figure}
Note that Figure \ref{fig:0-1} (a) shows the convexification result in blue on the whole interval. However, Figure \ref{fig:0-1} (b) demonstrates the piecewise convexification results for the different subdivisions and also suggests that the number of interval subdivisions affects the results of convexification, i.e., the number of divisions directly affects the effect of the approximation.

In this paper, let $\alpha=\max\left\{\alpha_{1},\cdots,  \alpha_{p}\right\}$. Incorporating with these definitions, this implies that
\begin{eqnarray}\label{eqna: 3.2}
\alpha\geq\alpha_{j}\geq\alpha_{j}^{k_t}, \forall j=[p]; k_t=[2^t], t\in \mathbb{N}_0.
\end{eqnarray}
From the above analysis, one can obtain the difference between $f_{i}^{k_t}$ and $f_{i}$.
\begin{proposition}{\cite{androulakis1995alphabb}}\label{proposition:3.1}
For any given subdivision $\mathbf{Y}^{t}$ of $X$ and any subinterval $Y^{k_t}\in\mathbf{Y}^{t}$,  one has

(i) $f_{j}^{k_t}(x)$ is a local lower bound function of $f_{j}(x)$ on $Y^{k_t}$, that is,
$$f_{j}^{k_t}(x)\leq f_{j}(x), \forall x\in Y^{k_t}, j\in[p].$$

(ii) For any  $x\in Y^{k_t}$, we have
$\sum\limits_{i=1}^{m}(a_{i}^{k_t}-x_{i})(b_{i}^{k_t}-x_{i})\geq -\left\|\frac{b^{k_t}-a^{k_t}}{2}\right\|_{2}^2.$
\end{proposition}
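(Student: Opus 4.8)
The plan is to prove both parts by direct computation from the definition of $f_j^{k_t}$ in \eqref{eqn:(7)}, exploiting the sign structure of the perturbation term on the box $Y^{k_t}$. For part (i), I would begin from the identity
\[
f_j^{k_t}(x) - f_j(x) = \frac{\alpha_j^{k_t}}{2}\sum_{i=1}^{m}(a_i^{k_t} - x_i)(b_i^{k_t} - x_i),
\]
which follows at once from \eqref{eqn:(7)}. Since $x \in Y^{k_t}$ forces $a_i^{k_t} \leq x_i \leq b_i^{k_t}$ for every $i$, each factor $a_i^{k_t} - x_i$ is non-positive while $b_i^{k_t} - x_i$ is non-negative, so every product $(a_i^{k_t} - x_i)(b_i^{k_t} - x_i)$ is non-positive and hence the whole sum is non-positive. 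Because $\alpha_j^{k_t} \geq 0$ (it is defined as a maximum with $0$), the right-hand side is non-positive, which yields $f_j^{k_t}(x) \leq f_j(x)$.

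For part (ii), the key observation is that the sum is separable across coordinates, so I would bound each term on its own. Fixing the $i$-th coordinate and writing $g_i(s) = (a_i^{k_t} - s)(b_i^{k_t} - s)$ for $s \in [a_i^{k_t}, b_i^{k_t}]$, this is an upward-opening parabola whose unconstrained minimizer is the midpoint $s = (a_i^{k_t} + b_i^{k_t})/2$. Evaluating there gives $g_i\bigl((a_i^{k_t} + b_i^{k_t})/2\bigr) = -\bigl((b_i^{k_t} - a_i^{k_t})/2\bigr)^2$, so that $g_i(s) \geq -\bigl((b_i^{k_t} - a_i^{k_t})/2\bigr)^2$ for every admissible $s$. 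Summing these $m$ inequalities reproduces exactly the claimed lower bound $-\|(b^{k_t} - a^{k_t})/2\|_2^2$.

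Neither part presents a genuine obstacle; both reduce to elementary sign and minimization arguments, and no convexity of $f_j$ itself is needed here. The only place requiring a little care is the bookkeeping in part (ii): one must recognize the separability and correctly locate the minimum of each one-variable quadratic. Since the perturbation term is a sum of independent single-variable quadratics, this term-by-term minimization is clean and no interaction between coordinates has to be controlled.
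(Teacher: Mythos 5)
Your proof is correct. The paper itself gives no proof of this proposition---it is simply cited from the $\alpha$BB literature (Androulakis et al.)---and your two elementary computations (the sign argument $a_i^{k_t}-x_i\leq 0$, $b_i^{k_t}-x_i\geq 0$ for part (i), and the coordinatewise minimization of the parabola $(a_i^{k_t}-s)(b_i^{k_t}-s)$ at the midpoint for part (ii)) are exactly the standard arguments underlying that reference, so nothing further is needed.
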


We are now in the position to define the (weakly) efficient solution set of this piecewise convexification problem. Our analysis heavily relies on this definition way, which will help us to approximate the globally approximate (weakly) efficient solution set of non-convex MOPs.  Let $\cX_{wE_{ap}^{k_t}}$ and $\cX_{E_{ap}^{k_t}}$ be weakly efficient set and efficient set of $({\rm CMOP})^{k_t}$, respectively. The set of (weakly) efficient solutions of the piecewise convexification problem with respect to the subdivision $\mathbf{Y}^t$ of $X$  is defined by
\begin{align}%\label{WE:1}
&\cX_{wE_{ap}}(\mathbf{Y}^t)=\left\{x\in\bigcup\limits_{k_t=1}^{2^t}\cX_{wE_{ap}^{k_t}}: ~\nexists~ y\in\bigcup\limits_{k_t=1}^{2^t}\cX_{wE_{ap}^{k_t}}~ s.t.~f(y)\prec f(x)\right\},\label{WE:1}\\
&\cX_{E_{ap}}(\mathbf{Y}^t)=\left\{x\in\bigcup\limits_{k_t=1}^{2^t}\cX_{E_{ap}^{k_t}}: ~\nexists~ y\in\bigcup\limits_{k_t=1}^{2^t}\cX_{E_{ap}^{k_t}}~s.t.~f(y)\preceq f(x)\right\}. \label{E:1}
\end{align}
These indicate that we need to check every (weakly) efficient solution for each $({\rm CMOP})^{k_t}$ through the partial order relation of the original problem. Since $t$ uniquely determines the subdivision $Y^{t}$ of $X$, thus  $\cX_{wE_{ap}}(\mathbf{Y}^t)$ and $\cX_{E_{ap}}(\mathbf{Y}^t)$ can be abbreviated as $\cX_{wE_{ap}}(t)$ and $\cX_{E_{ap}}(t)$, respectively. It can easily be verified that solution set  $\cX_{wE_{ap}}(t)$ is not empty for any $t\in N$. So far we have not proved that $\cX_{E_{ap}}(t)$ is a non-empty set, or given a counter-example to show $\cX_{E_{ap}}(t)=\emptyset$. However, from the following algorithm, only finite solutions are found for each $\cX_{E_{ap}^{k_t}}$, that is, $\cX_{E_{ap}^{k_t}}$ is a compact set, then the set $\cX_{E_{ap}}(t)$ is non-empty in the actual calculation. Thus, in this paper we always assume that $\cX_{E_{ap}}(t)\neq \emptyset$ for any $t\in N$.
\begin{remark}\label{remark:3.1} From the definition of $\cX_{wE_{ap}}(t)$, it is easy to see that

(i) If $\hat{x}\notin \cX_{wE_{ap}}(t)$, then there exists $k_{t}\in\{1,\cdots, 2^t\}$ such that $\hat{x}\notin \cX_{wE_{ap}^{k_t}}$, or $\hat{x}\in \cX_{wE_{ap}^{k_t}}$ satisfying $f(\hat{y})\prec f(\hat{x})$ for some  $\hat{y}\in \bigcup\limits_{k_t=1}^{2^t}\cX_{wE_{ap}^{k_t}}$.

(ii) If $\hat{x}\in\cX_{wE_{ap}}(t)$, then there must exist  $k_t$ satisfying $\hat{x}\in\cX_{wE_{ap}^{k_t}}$.
\end{remark}
This remark also holds for $\cX_{E_{ap}}(t)$.

{\bf \subsection{Convergence
of the Solution Set for the Piecewise Convexification Problem}}
\noindent
In this subsection, we develop the convergence
of the solution set for the piecewise convexification problem. More precisely, we investigate some relationships between the set of approximate (weakly) efficient solutions of the original problem and the set of (weakly) efficient solutions of the convexification problem.

Let $e=(1,\cdots,1)\in \mathbb{R}^p$ be all-ones vector. For ${\varepsilon}>0$, let $\cX_{wE}^{\varepsilon}$ and  $\cX_{E}^{\varepsilon}$ denote the set of all $\varepsilon$-weakly efficient solutions and of all $\varepsilon$-efficient solutions for (MOP), respectively, i.e.,
\begin{align*}
&\cX_{wE}^{\varepsilon}=\left\{x\in X: ~~\nexists~ y\in X~~ s.t.~~f(y)+{\varepsilon}e\prec f(x)\right\},\\
&\cX_{E}^{\varepsilon}=\left\{x\in X: ~~\nexists~ y\in X~~ s.t.~~f(y)+{\varepsilon}e\preceq f(x)\right\}.
\end{align*}
If $\hat{x}\in X$ is a $\varepsilon$-(weakly) efficient solution, then $f(\hat{x})$ is a $\varepsilon$-(weakly) non-dominated point in image space. From the above definitions, we discuss the relation between $\cX_{wE_{ap}}(t)$ and $\cX_{wE}^{\varepsilon}$ which plays a vital role in determining convergence of set of (weakly) efficient solutions for the piecewise convexification problem.

\begin{theorem}\label{thm:4.2} For any $\varepsilon>0$, there exists ${t}_{\varepsilon}\in\mathbb{N}_0$ such that
\begin{align*}
\cX_{wE_{ap}}(t)\subseteq \cX_{wE}^{\varepsilon}, ~~~~\forall~ t>{t}_{\varepsilon},
\end{align*}
where $\cX_{wE_{ap}}(t)$ is the weakly efficient solution set
of the piecewise convexification problem with respect to the subdivision $\mathbf{Y}^{t}$ of $X$.
\end{theorem}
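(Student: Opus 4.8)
The plan is to argue by contraposition. Fix $\varepsilon>0$ and a division time $t$ (to be constrained later), take an arbitrary $\hat{x}\in\cX_{wE_{ap}}(t)$, and show that for $t$ large enough $\hat{x}$ cannot fail to be $\varepsilon$-weakly efficient. So suppose instead that $\hat{x}\notin\cX_{wE}^{\varepsilon}$. By the definition of $\cX_{wE}^{\varepsilon}$ there is a point $y\in X$ with $f(y)+\varepsilon e\prec f(\hat{x})$, i.e. $f_j(y)+\varepsilon<f_j(\hat{x})$ for every $j\in[p]$. Since $X=\bigcup_{k_t=1}^{2^t}Y^{k_t}$, the point $y$ lies in at least one subinterval $Y^{k_t}$; I fix such an index $k_t$ and work inside the convex relaxation $(\mathrm{CMOP})^{k_t}$.

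The first step is to pass from $y$ to an actual weakly efficient solution of $(\mathrm{CMOP})^{k_t}$. Because $Y^{k_t}$ is a box and each $f_j^{k_t}$ is continuous, Lemma \ref{lemma:2.6}(ii) applies to $(\mathrm{CMOP})^{k_t}$: either $y\in\cX_{wE_{ap}^{k_t}}$, in which case I set $y^{\ast}=y$, or else there is $y^{\ast}\in\cX_{wE_{ap}^{k_t}}$ with $f^{k_t}(y^{\ast})\prec f^{k_t}(y)$. In both cases I obtain a point $y^{\ast}\in\cX_{wE_{ap}^{k_t}}$ satisfying $f_j^{k_t}(y^{\ast})\le f_j^{k_t}(y)$ for all $j\in[p]$.

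The second step is to control the gap between the relaxed and original objectives \emph{uniformly}. Proposition \ref{proposition:3.1}(i) gives $f_j^{k_t}(y)\le f_j(y)$, while Proposition \ref{proposition:3.1}(ii), combined with $\left\|\frac{b^{k_t}-a^{k_t}}{2}\right\|_{2}^{2}\le\frac{1}{4}|T(t)|$ and the bound $\alpha_j^{k_t}\le\alpha$, yields $0\le f_j(y^{\ast})-f_j^{k_t}(y^{\ast})\le\frac{\alpha}{8}|T(t)|$. Chaining these estimates, for every $j\in[p]$,
\begin{align*}
f_j(y^{\ast})\le f_j^{k_t}(y^{\ast})+\tfrac{\alpha}{8}|T(t)|\le f_j^{k_t}(y)+\tfrac{\alpha}{8}|T(t)|\le f_j(y)+\tfrac{\alpha}{8}|T(t)|<f_j(\hat{x})-\varepsilon+\tfrac{\alpha}{8}|T(t)|.
\end{align*}

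The crucial observation is that the error constant $\frac{\alpha}{8}|T(t)|$ is uniform: $\alpha$ is fixed by the problem and $|T(t)|$ depends only on $t$, not on $\hat{x}$, on $y$, or on the subinterval $Y^{k_t}$. Since $|T(t)|\le\|b-a\|_{2}^{2}\bigl(1-\frac{3}{4m}\bigr)^{t}\to0$, I can choose $t_{\varepsilon}\in\mathbb{N}_0$ with $\frac{\alpha}{8}|T(t)|<\varepsilon$ for all $t>t_{\varepsilon}$. Then the displayed chain gives $f_j(y^{\ast})<f_j(\hat{x})$ for all $j$, i.e. $f(y^{\ast})\prec f(\hat{x})$ with $y^{\ast}\in\bigcup_{k_t=1}^{2^t}\cX_{wE_{ap}^{k_t}}$, which directly contradicts $\hat{x}\in\cX_{wE_{ap}}(t)$. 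Hence $\hat{x}\in\cX_{wE}^{\varepsilon}$, and since $\hat{x}$ was arbitrary, $\cX_{wE_{ap}}(t)\subseteq\cX_{wE}^{\varepsilon}$ for every $t>t_{\varepsilon}$. I expect the only real subtlety to be verifying that $t_{\varepsilon}$ can be chosen independently of the competitor point $\hat{x}$ and of which subinterval $y$ happens to fall into; this is precisely what the uniform $\alpha{\rm BB}$ gap bound secures, so once that estimate is in place the argument closes cleanly.
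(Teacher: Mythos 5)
Your proof is correct and follows essentially the same route as the paper's: both invoke Lemma \ref{lemma:2.6}(ii) to replace the dominating point $y$ by a weakly efficient solution of $(\mathrm{CMOP})^{k_t}$, bound the relaxation gap via Proposition \ref{proposition:3.1}(ii) together with $\alpha_j^{k_t}\le\alpha$ and $|T(t)|\to 0$, and derive a contradiction with $\hat{x}\in\cX_{wE_{ap}}(t)$. The only cosmetic difference is that you merge the paper's two cases ($y$ already weakly efficient for the subproblem or not) into a single uniform choice of $y^{\ast}$.
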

\begin{proof}
From the fact that $|T(t)|\rightarrow 0$ as $t\rightarrow 0$ and the definition of $\alpha$, it follows that for any $\varepsilon>0$ there exists ${t}_{\varepsilon}\in\mathbb{N}$ satisfying
\begin{eqnarray}\label{eqnarry:1-11-2}
 \max\limits_{k_t\in[2^t]}\frac{\alpha}{2}\left\|\frac{b^{k_t}-a^{k_t}}{2}\right\|^2<\varepsilon, \forall t>{t}_{\varepsilon}.
\end{eqnarray}
Next, we prove $\cX_{wE_{ap}}(t)\subseteq\cX_{wE}^{\varepsilon}$ for any $t>{t}_{\varepsilon}$. By contradiction, we assume that there exists $t_0>{t}_{\varepsilon}$ satisfying $\cX_{wE_{ap}}(t_0)\nsubseteq \cX_{wE}^{\varepsilon}$, that is, one can find $\hat{x}\in X$ satisfying $\hat{x}\in \cX_{wE_{ap}}(t_0)$ and $\hat{x}\notin \cX_{wE}^{\varepsilon}$. Here, $\cX_{wE_{ap}}(t_0)$ is a solution set w.r.t. the subdivision $\mathbf{Y}^{t_0}$ of $X$ where $\mathbf{Y}^{t_0}=\{Y^{1},\cdots,Y^{2^{t_0}}\}$. Thus $\hat{x}\notin \cX_{wE}^{\varepsilon}$ implies that there exists $\hat{y}\in Y^{j_{t_0}}\subseteq X$ for some $j_{t_0}\in\{1,\cdots,2^{t_0}\}$ such that
\begin{eqnarray}\label{eqna:4.1}
f(\hat{y})+{\varepsilon}e\prec f(\hat{x}).
\end{eqnarray}
 If $\hat{y}\in \cX_{wE_{ap}^{j_{t_0}}}\subset Y^{j_{t_0}}$, then $f(\hat{y})\prec f(\hat{x})$ from $\varepsilon>0$ and (\ref{eqna:4.1}).  It is a contradiction to $\hat{x}\in \cX_{wE_{ap}}(t_0)$. Otherwise, if $\hat{y}\in Y^{j_{t_0}}\setminus \cX_{wE_{ap}^{j_{t_0}}}$, then combining with Lemma \ref{lemma:2.6} (ii), there exists  $\nu\in \cX_{wE_{ap}^{j_{t_0}}}$ such that  $f^{j_{t_0}}(\nu)\prec f^{j_{t_0}}(\hat{y})$. From (\ref{eqna:4.1}) and  definition of $f_{i}^{j_{t_0}}$, it is easy to verify that
$$f^{j_{t_0}}(\nu)\prec f^{j_{t_0}}(\hat{y})\leqq f(\hat{y})\prec f(\hat{x})-{\varepsilon}e.$$
By the definition of $f^{j_{t_0}}$, we have
\begin{align}\label{align:1-11-1}
f(\nu)+\boldsymbol{\tau}+{\varepsilon}e\prec f(\hat{x}),\end{align}
where $\boldsymbol{\tau}=(\tau_1,\cdots,\tau_p)$ and $\tau_i=\frac{\alpha_{i}^{j_{t_0}}}{2}\langle a^{j_{t_0}}-\nu, b^{j_{t_0}}-\nu\rangle$ for any $i\in[p]$. According to (\ref{eqna: 3.2}) and (\ref{eqnarry:1-11-2}),  it concludes that
\begin{align*}\frac{\alpha_{i}^{j_{t_0}}}{2}\left\|\frac{b^{j_{t_0}}-a^{j_{t_0}}}{2}\right\|^2
<\frac{\alpha}{2}\left\|\frac{b^{j_{t_0}}-a^{j_{t_0}}}{2}\right\|^2
<\varepsilon, \forall i=1,\cdots,p. \end{align*}
It follows from Proposition \ref{proposition:3.1} (ii) that
$\tau_i + \varepsilon >0, i=[p]$. It conducts $f(\nu)\prec f(\hat{x})$ by   (\ref{align:1-11-1}), which implies a contradiction to $\hat{x}\in\cX_{wE_{ap}}(t_{0})$. Thus we have derived that $\cX_{wE_{ap}}(t)\subseteq \cX_{wE}^{\varepsilon}$ for any $t>{t}_{\varepsilon}$.
\end{proof}

Similarly,  one can build the same relation between $\cX_{E_{ap}}(t)$ and $\cX_{E}^{\varepsilon}$.
\begin{theorem}\label{thm1:4.3} For any $\varepsilon>0$, there exists $t_{\varepsilon}\in\mathbb{N}_0$ such that $\cX_{E_{ap}}(t)\subseteq\cX_{E}^{\varepsilon}$ for any $t>t_{\varepsilon}$, where $\cX_{E_{ap}}(t)$ denotes the efficient solution set of the piecewise convexification problem w.r.t. the subdivision $\mathbf{Y}^{t}$ of $X$.
\end{theorem}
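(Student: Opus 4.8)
The plan is to mirror the structure of the proof of Theorem \ref{thm:4.2} almost verbatim, replacing the strict-inequality partial order $\prec$ by the weaker $\preceq$ throughout. First I would fix $\varepsilon>0$ and invoke the decay $|T(t)|\to 0$ together with the definition of $\alpha$ to extract the same threshold $t_\varepsilon$ satisfying the bound \eqref{eqnarry:1-11-2}, namely $\max_{k_t\in[2^t]}\frac{\alpha}{2}\bigl\|\frac{b^{k_t}-a^{k_t}}{2}\bigr\|^2<\varepsilon$ for all $t>t_\varepsilon$. This is the one genuinely quantitative input, and it carries over unchanged since it does not depend on which order relation we use.

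Next I would argue by contradiction: suppose some $t_0>t_\varepsilon$ admits $\hat{x}\in\cX_{E_{ap}}(t_0)$ with $\hat{x}\notin\cX_{E}^{\varepsilon}$. By the definition of $\cX_{E}^{\varepsilon}$ there is a point $\hat{y}\in Y^{j_{t_0}}\subseteq X$ for some index $j_{t_0}$ with $f(\hat{y})+\varepsilon e\preceq f(\hat{x})$. I then split into two cases exactly as before. If $\hat{y}$ already lies in the efficient set $\cX_{E_{ap}^{j_{t_0}}}$ of the sub-problem, then since $\varepsilon>0$ we get $f(\hat{y})\preceq f(\hat{x})$ directly, contradicting $\hat{x}\in\cX_{E_{ap}}(t_0)$. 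If instead $\hat{y}\notin\cX_{E_{ap}^{j_{t_0}}}$, I would apply Lemma \ref{lemma:2.6}(i) to the convex sub-problem $({\rm CMOP})^{j_{t_0}}$ to obtain $\nu\in\cX_{E_{ap}^{j_{t_0}}}$ with $f^{j_{t_0}}(\nu)\preceq f^{j_{t_0}}(\hat{y})$, and chain the inequalities $f^{j_{t_0}}(\nu)\preceq f^{j_{t_0}}(\hat{y})\leqq f(\hat{y})\preceq f(\hat{x})-\varepsilon e$ using Proposition \ref{proposition:3.1}(i) for the middle step.

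From here the final step is to absorb the error term. Writing $\tau_i=\frac{\alpha_i^{j_{t_0}}}{2}\langle a^{j_{t_0}}-\nu,\,b^{j_{t_0}}-\nu\rangle$ I would conclude $f(\nu)+\boldsymbol{\tau}+\varepsilon e\preceq f(\hat{x})$ from the definition of $f^{j_{t_0}}$, then use \eqref{eqna: 3.2}, \eqref{eqnarry:1-11-2} and Proposition \ref{proposition:3.1}(ii) to show $\tau_i+\varepsilon>0$ for every $i\in[p]$, which yields $f(\nu)\preceq f(\hat{x})$ and contradicts $\hat{x}\in\cX_{E_{ap}}(t_0)$.

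The one point requiring care, and the main obstacle, is the interaction between the strict positivity $\tau_i+\varepsilon>0$ and the non-strict relation $\preceq$. Recall that $y^1\preceq y^2$ demands $y^1\neq y^2$ in addition to the componentwise $\leq$. I must verify that adding a strictly positive vector to the left-hand side of $f(\nu)+\boldsymbol{\tau}+\varepsilon e\preceq f(\hat{x})$ preserves the $\preceq$ relation rather than collapsing it: since $\tau_i+\varepsilon>0$ gives $f_i(\nu)<f_i(\nu)+\tau_i+\varepsilon\le f_i(\hat{x})$ coordinatewise, the strict inequality in each coordinate guarantees both $f(\nu)\leqq f(\hat{x})$ and $f(\nu)\neq f(\hat{x})$, so $f(\nu)\preceq f(\hat{x})$ holds legitimately. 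I would state this verification explicitly so that the weakening from $\prec$ to $\preceq$ does not silently break the inequality bookkeeping, and otherwise note that the argument is parallel to Theorem \ref{thm:4.2}.
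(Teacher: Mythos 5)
Your proposal is correct and follows essentially the same route as the paper, which itself simply defers to the argument of Theorem \ref{thm:4.2} with Lemma \ref{lemma:2.6}(i) in place of (ii); your version actually supplies more detail than the paper does, and your explicit check that the strictly positive perturbation $\tau_i+\varepsilon>0$ yields coordinatewise strict inequalities (hence $f(\nu)\preceq f(\hat{x})$, indeed $f(\nu)\prec f(\hat{x})$) correctly handles the only delicate point in passing from $\prec$ to $\preceq$.
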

\begin{proof}
Using the same argument as in the proof of Theorem \ref{thm:4.2}, we can easily carry out that for any $\varepsilon$ there exists $t_{\varepsilon}\in\mathbb{N}$ satisfying (\ref{eqnarry:1-11-2}). As we have assumed in the previous section, $\cX_{E_{ap}}(t)\neq \emptyset$ for any  $t\in N$. Therefore, we assume that there exists $\hat{x}\in X$ such that $\hat{x}\in \cX_{E_{ap}}(t_0)$ and $\hat{x}\notin \cX_{E}^{\varepsilon}$ for some $t_0>t_{\varepsilon}$. Then, it follows from the assumption $\hat{x}\notin \cX_{E}^{\varepsilon}$ that there is $\hat{y}\in X$ satisfying $f(\hat{y})+ {\varepsilon}e\preceq f(\hat{x})$. Combining Lemma \ref{lemma:2.6} (i)  with (\ref{eqnarry:1-11-2}), the remainder of the argument is analogous to that in Theorem \ref{thm:4.2}, which is contrary to $\hat{x}\in \cX_{E_{ap}}(t_0)$. Therefore, $\cX_{E_{ap}}(t)\subseteq\cX_{E}^{\varepsilon}$ for any $t>t_{\varepsilon}$.
\end{proof}
Theorems \ref{thm:4.2} and \ref{thm1:4.3} show that the solution set of this piecewise convexification problem is a lower bound set of the approximate (weakly) efficient solution set of (MOP). In order to get a closer approximation, we need also to be concerned about the upper bound set of the approximate (weakly) efficient solution set of (MOP). In what follows, we study the approximate solution set of the piecewise convexification problem, which is an upper bound set of approximation (weakly) efficient solution set of (MOP).

{\bf \subsection{Convergence of the Approximate Solution Set for the Piecewise Convexification Problem}}
\noindent
In this subsection, we consider the sets of the approximate (weakly) efficient solutions of the piecewise convexification problem and establish the convergence results of the approximate solution set.

For any given subdivision $\mathbf{Y}^{t}$ of $X$.  Let $\cX_{wE_{ap}^{k_t}}^{\varepsilon}$ and $\cX_{E_{ap}^{k_t}}^{\varepsilon}$ be denoted the set of all $\varepsilon$-weakly efficient solutions and of all $\varepsilon$-efficient solutions of $({\rm CMPO})^{k_t}$ on $Y^{k_t}\in\mathbf{Y}^{t}$, respectively. Then, we define the approximate weakly efficient solution set and  approximate efficient solution set of the piecewise convexification problem w.r.t. the subdivision $\mathbf{Y}^{t}$ of $X$, respectively, i.e.,
\begin{align}
&\cX_{wE_{ap}}^{\varepsilon}(\mathbf{Y}^{t})=\left\{x\in\bigcup\limits_{k_{t}=1}^{2^t}\cX_{wE_{ap}^{k_t}}^{\varepsilon}: ~\nexists~ y\in\bigcup\limits_{k_{t}=1}^{2^t}\cX_{wE_{ap}^{k_t}}^{\varepsilon}, s.t.~ f(y)+ {\varepsilon}e\prec f(x)\right\},\label{2022031802}\\
&\cX_{E_{ap}}^{\varepsilon}(\mathbf{Y}^{t})=\left\{x\in\bigcup\limits_{k_{t}=1}^{2^t}\cX_{E_{ap}^{k_t}}^{\varepsilon}: ~\nexists~ y\in\bigcup\limits_{k_{t}=1}^{2^t}\cX_{E_{ap}^{k_t}}^{\varepsilon}, s.t. ~ f(y)+{\varepsilon}e\preceq f(x)\right\}.\label{2022031801}
\end{align}
For simplicity, $\cX_{wE_{ap}}^{\varepsilon}(\mathbf{Y}^{t})$ and $\cX_{E_{ap}}^{\varepsilon}(\mathbf{Y}^{t})$ can be abbreviated as $\cX_{wE_{ap}}^{\varepsilon}(t)$ and $\cX_{E_{ap}}^{\varepsilon}(t)$, respectively.
It is easy to show that $\cX_{wE_{ap}}^{\varepsilon}(t)\neq \emptyset$ for any number of subdivision $t$. Moreover, according to Lemma 4.12 in \cite{chen2006vector} one can conclude that $\cX_{E_{ap}}^{\varepsilon}(t)\neq \emptyset$.
With the help of the preceding concepts, we can now present some vital characters of  $\cX_{wE_{ap}}^{\varepsilon}(t)$ and $\cX_{wE_{ap}}(t)$.
\begin{proposition}\label{proposition:20210923} For any $\varepsilon>0$ there exists  $t_{\varepsilon}\in\mathbb{N}_0$ such that
\begin{align*}
\cX_{wE_{ap}}(t)\subseteq \cX_{wE_{ap}}^{\varepsilon}(t),~~ \forall~t>t_{\varepsilon}.
\end{align*}
\end{proposition}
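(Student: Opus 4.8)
The plan is to reuse, almost verbatim, the contradiction machinery of Theorem \ref{thm:4.2}, since the source set $\cX_{wE_{ap}}(t)$ and the target set $\cX_{wE_{ap}}^{\varepsilon}(t)$ differ only by the $\varepsilon$-cushion on the dominance relation. First I would fix $\varepsilon>0$ and, invoking $|T(t)|\to 0$ as $t\to\infty$ together with the definition of $\alpha$, select $t_{\varepsilon}\in\mathbb{N}_0$ so that the error estimate (\ref{eqnarry:1-11-2}) holds for every $t>t_{\varepsilon}$. This is the very same $t_{\varepsilon}$ used in Theorem \ref{thm:4.2}, and it is again precisely what will let us absorb the convexification error into the $\varepsilon$-term at the end. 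Then, for an arbitrary $t>t_{\varepsilon}$ and an arbitrary $\hat{x}\in\cX_{wE_{ap}}(t)$, I must verify the two defining conditions of $\cX_{wE_{ap}}^{\varepsilon}(t)$ in (\ref{2022031802}): that $\hat{x}$ lies in the base union $\bigcup_{k_t}\cX_{wE_{ap}^{k_t}}^{\varepsilon}$, and that no element of that union $\varepsilon$-dominates $\hat{x}$.

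The base-set membership is the routine half. By Remark \ref{remark:3.1}(ii) there is some $k_t$ with $\hat{x}\in\cX_{wE_{ap}^{k_t}}$, and since $\varepsilon>0$ every exactly weakly efficient solution of $({\rm CMOP})^{k_t}$ is a fortiori $\varepsilon$-weakly efficient: if $f^{k_t}(y)+\varepsilon e\prec f^{k_t}(\hat{x})$ then, using $f^{k_t}(y)\prec f^{k_t}(y)+\varepsilon e$, transitivity gives $f^{k_t}(y)\prec f^{k_t}(\hat{x})$. Thus $\cX_{wE_{ap}^{k_t}}\subseteq\cX_{wE_{ap}^{k_t}}^{\varepsilon}$, so $\hat{x}\in\bigcup_{k_t}\cX_{wE_{ap}^{k_t}}^{\varepsilon}$.

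The substantive half is ruling out an $\varepsilon$-dominator, which I would do by contradiction. Suppose some $y\in\cX_{wE_{ap}^{j_t}}^{\varepsilon}\subseteq Y^{j_t}$ satisfies $f(y)+\varepsilon e\prec f(\hat{x})$. Whether or not $y$ is exactly weakly efficient for $({\rm CMOP})^{j_t}$, Lemma \ref{lemma:2.6}(ii) (taking $\nu=y$ in the trivial case) furnishes $\nu\in\cX_{wE_{ap}^{j_t}}$ with $f^{j_t}(\nu)\leqq f^{j_t}(y)$. Chaining this with $f^{j_t}(y)\leqq f(y)$ from Proposition \ref{proposition:3.1}(i) and with the assumed strict inequality yields $f^{j_t}(\nu)+\varepsilon e\prec f(\hat{x})$. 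Writing $f^{j_t}(\nu)=f(\nu)+\boldsymbol{\tau}$ with $\tau_i=\frac{\alpha_{i}^{j_t}}{2}\langle a^{j_t}-\nu,\,b^{j_t}-\nu\rangle$ as in Theorem \ref{thm:4.2}, and using Proposition \ref{proposition:3.1}(ii) together with (\ref{eqna: 3.2}) and (\ref{eqnarry:1-11-2}) to conclude $\tau_i+\varepsilon>0$ for every $i\in[p]$, I can strip off both the $\varepsilon e$ term and the error term simultaneously to obtain $f(\nu)\prec f(\hat{x})$ with $\nu\in\bigcup_{k_t}\cX_{wE_{ap}^{k_t}}$, contradicting $\hat{x}\in\cX_{wE_{ap}}(t)$.

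I expect the main obstacle to be careful bookkeeping in this last step rather than any fresh idea. Two points need attention: $y$ is only guaranteed to be $\varepsilon$-weakly efficient within its own subproblem, so the passage to an exactly weakly efficient $\nu$ through Lemma \ref{lemma:2.6} is essential and cannot be skipped; and the coordinatewise inequality $\tau_i+\varepsilon>0$ must be established for all $i\in[p]$ at once, so that the strict order $\prec$ survives the removal of the error term. Everything else is a direct transcription of the estimates already validated in the proof of Theorem \ref{thm:4.2}.
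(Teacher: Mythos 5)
Your proposal is correct and follows essentially the same route as the paper's own proof: fix $t_{\varepsilon}$ via (\ref{eqnarry:1-11-2}), use $\cX_{wE_{ap}^{k_t}}\subseteq\cX_{wE_{ap}^{k_t}}^{\varepsilon}$ for base-set membership, and rule out an $\varepsilon$-dominator by passing through Lemma \ref{lemma:2.6}(ii) to an exactly weakly efficient $\nu$ and absorbing $\boldsymbol{\tau}+\varepsilon e$ via Proposition \ref{proposition:3.1}(ii) and (\ref{eqna: 3.2}). The only cosmetic difference is that you merge the paper's two cases (whether or not $\hat{y}$ is exactly weakly efficient) into a single chain with $\leqq$, which is harmless.
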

\begin{proof}
 For any $\varepsilon>0$ there exists  $t_{\varepsilon}\in\mathbb{N}_0$ such that (\ref{eqnarry:1-11-2})  holds and $\cX_{wE_{ap}}^{\varepsilon}(t)\neq\emptyset$ for any $t$. Next, we assume that $\cX_{wE_{ap}}(t_0)\nsubseteq \cX_{wE_{ap}}^{\varepsilon}(t_0)$ for some $t_0>t_{\varepsilon}$, then there exists $\hat{x}\in X$ such that $\hat{x}\in\cX_{wE_{ap}}(t_0)$ and $\hat{x}\notin\cX_{wE_{ap}}^{\varepsilon}(t_0)$. The assumption $\hat{x}\in\cX_{wE_{ap}}(t_0)$ indicates that $\hat{x}\in \bigcup\limits_{k_t=1}^{2^{t_0}}\cX_{wE_{ap}^{k_t}}$. In fact, $\cX_{wE_{ap}^{k_{t}}}\subseteq\cX_{wE_{ap}^{k_{t}}}^{\varepsilon}$ holds for any $\varepsilon>0$ and $k_{t}$. Then $\hat{x}\in\bigcup\limits_{k_t=1}^{2^{t_0}}\cX_{wE_{ap}^{k_t}}^{\varepsilon}$. By Remark \ref{remark:3.1} (i), the assumption $\hat{x}\notin\cX_{wE_{ap}}^{\varepsilon}(t_0)$  implies that there exists  $\hat{y}\in\cX_{wE_{ap}^{j_{t_0}}}^{\varepsilon}\subset\bigcup\limits_{k_t=1}^{2^{t_0}}\cX_{wE_{ap}^{k_t}}^{\varepsilon}$ for some $j_{t_0}\in[2^{t_0}]$ such that
\begin{eqnarray}\label{eqna:4.8}
f(\hat{y})+ {\varepsilon}e\prec f(\hat{x}).
\end{eqnarray}
It is easy to show that if $\hat{y}\in\cX_{wE_{ap}^{j_{t_0}}}\subset\cX_{wE_{ap}^{j_{t_0}}}^{\varepsilon}$, then it concludes $\hat{x}\notin\cX_{wE_{ap}}(t_0)$ since $f(\hat{y})\prec f(\hat{x})$ from (\ref{eqna:4.8}). Otherwise, if $\hat{y}\in\cX_{wE_{ap}^{j_{t_0}}}^{\varepsilon}\setminus \cX_{wE_{ap}^{j_{t_0}}}$, then as Lemma \ref{lemma:2.6} (ii)
there exists $\nu\in\cX_{wE_{ap}^{j_{t_0}}}\subset\bigcup\limits_{k_n=1}^{2^{t_0}}\cX_{wE_{ap}^{k_t}}$ such that $f^{j_{t_0}}(\nu)\prec f^{j_{t_0}}(\hat{y})$. From the definition of $f_{i}^{j_{t_0}}$ and (\ref{eqna:4.8}), it can be rewritten as
\begin{eqnarray}\label{eqnarray:2}
f(\nu)+\boldsymbol{\tau}+ {\varepsilon}e\prec f(\hat{x}),\end{eqnarray}
where $\boldsymbol{\tau}=(\tau_1,\cdots, \tau_p)$ and $\tau_{i}=\frac{\alpha_{i}^{j_{t_0}}}{2}\langle a^{j_{t_0}}-\nu, b^{j_{t_0}}-\nu\rangle$. Moreover, one can yield that $\tau_{i}+\varepsilon>0$ for any $i$ by (\ref{eqnarry:1-11-2}). Hence, (\ref{eqnarray:2}) conducts  $f(\nu)\prec f(\hat{x})$. It implies $\hat{x}\notin \cX_{wE_{ap}}(t_0)$ for $\nu\in \cX_{wE_{ap}^{j_{t_0}}}\subseteq \bigcup\limits_{k_n=1}^{2^{t_0}}\cX_{wE_{ap}^{k_t}}$. Apparently, this is a contradiction to the assumption $\hat{x}\in \cX_{wE_{ap}}(t_0)$, thus the theorem holds.
\end{proof}

\begin{proposition} For any $\varepsilon>0$ there exists $t_{\varepsilon}\in\mathbb{N}_0$  such that
\begin{align*}
\cX_{E_{ap}}(t)\subseteq\cX_{E_{ap}}^{\varepsilon}(t), ~~\forall~t>t_{\varepsilon}.
\end{align*}
\end{proposition}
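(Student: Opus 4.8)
The plan is to argue by contradiction, following the template of the proof of Proposition \ref{proposition:20210923} almost verbatim, but replacing the strict order $\prec$ by $\preceq$ and invoking Lemma \ref{lemma:2.6} (i) in place of (ii). First I would fix, for the given $\varepsilon>0$, an index $t_\varepsilon\in\mathbb{N}_0$ for which (\ref{eqnarry:1-11-2}) holds and for which $\cX_{E_{ap}}^{\varepsilon}(t)\neq\emptyset$ for all $t$ (the nonemptiness was recorded just before the statement via Lemma 4.12 in \cite{chen2006vector}). Suppose, towards a contradiction, that $\cX_{E_{ap}}(t_0)\nsubseteq\cX_{E_{ap}}^{\varepsilon}(t_0)$ for some $t_0>t_\varepsilon$, and pick $\hat x\in\cX_{E_{ap}}(t_0)\setminus\cX_{E_{ap}}^{\varepsilon}(t_0)$.

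Next I would locate a dominating point. Since $\hat x\in\cX_{E_{ap}}(t_0)$ we have $\hat x\in\bigcup_{k_t=1}^{2^{t_0}}\cX_{E_{ap}^{k_t}}$, and because $\cX_{E_{ap}^{k_t}}\subseteq\cX_{E_{ap}^{k_t}}^{\varepsilon}$ for every $\varepsilon>0$ and every $k_t$, also $\hat x\in\bigcup_{k_t=1}^{2^{t_0}}\cX_{E_{ap}^{k_t}}^{\varepsilon}$. Using the analogue of Remark \ref{remark:3.1} (i) for $\cX_{E_{ap}}^{\varepsilon}$ (which the paper asserts holds), the failure $\hat x\notin\cX_{E_{ap}}^{\varepsilon}(t_0)$ then forces the existence of $\hat y\in\cX_{E_{ap}^{j_{t_0}}}^{\varepsilon}$, for some $j_{t_0}\in[2^{t_0}]$, with $f(\hat y)+\varepsilon e\preceq f(\hat x)$.

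Then I would split into the two cases used before. If $\hat y\in\cX_{E_{ap}^{j_{t_0}}}$, then since $\varepsilon>0$ the shift $\varepsilon e$ already yields the componentwise strict relation $f(\hat y)\prec f(\hat x)$, hence $f(\hat y)\preceq f(\hat x)$ with $\hat y$ in the exact union, contradicting $\hat x\in\cX_{E_{ap}}(t_0)$. If instead $\hat y\in\cX_{E_{ap}^{j_{t_0}}}^{\varepsilon}\setminus\cX_{E_{ap}^{j_{t_0}}}$, I would apply Lemma \ref{lemma:2.6} (i) to the convex box-constrained subproblem $({\rm CMOP})^{j_{t_0}}$ to obtain $\nu\in\cX_{E_{ap}^{j_{t_0}}}$ with $f^{j_{t_0}}(\nu)\preceq f^{j_{t_0}}(\hat y)$. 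Chaining this with Proposition \ref{proposition:3.1} (i) (so that $f^{j_{t_0}}(\hat y)\leqq f(\hat y)$) and the relation $f(\hat y)+\varepsilon e\preceq f(\hat x)$ reproduces the analogue of (\ref{eqnarray:2}), namely $f(\nu)+\boldsymbol{\tau}+\varepsilon e\preceq f(\hat x)$, where $\tau_i=\tfrac{\alpha_i^{j_{t_0}}}{2}\langle a^{j_{t_0}}-\nu,b^{j_{t_0}}-\nu\rangle$. Finally, Proposition \ref{proposition:3.1} (ii) together with (\ref{eqna: 3.2}) and (\ref{eqnarry:1-11-2}) gives $\tau_i+\varepsilon>0$ for each $i$, so $f(\nu)\prec f(\hat x)$, whence $f(\nu)\preceq f(\hat x)$ with $\nu$ in the exact union — again contradicting $\hat x\in\cX_{E_{ap}}(t_0)$.

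The routine parts are the chaining of inequalities and the error-term estimate, which are structurally identical to the weakly efficient case. The only point demanding care, and where I expect the genuine obstacle, is the bookkeeping of the partial order $\preceq$ (which carries the extra requirement $f(\nu)\neq f(\hat x)$) rather than the strict $\prec$ used in Proposition \ref{proposition:20210923}: one must verify that the $\alpha{\rm BB}$ error term $\boldsymbol{\tau}$ cannot cancel the $\varepsilon$-gap, i.e. that $\tau_i+\varepsilon>0$ uniformly in $i$, so that the final comparison is componentwise strict and hence a legitimate $\preceq$-dominance. I would also confirm that Lemma \ref{lemma:2.6} (i) is applicable here, since $Y^{j_{t_0}}=[a^{j_{t_0}},b^{j_{t_0}}]$ is a box and $f^{j_{t_0}}$ is continuous, so its hypotheses are met on the subproblem.
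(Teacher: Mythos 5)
Your proposal is correct and matches the paper's intent exactly: the paper's own proof consists solely of the remark that one should ``adopt the same procedure as in the proof of Proposition \ref{proposition:20210923},'' and your write-up is precisely that adaptation, with the right substitutions ($\preceq$ for $\prec$, Lemma \ref{lemma:2.6}~(i) for (ii)) and the correct observation that $\tau_i+\varepsilon>0$ still yields a componentwise strict, hence legitimate $\preceq$-dominance.
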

\begin{proof}
We shall adopt the same procedure as in the proof of Proposition \ref{proposition:20210923}.
\end{proof}

In what follows, we show a vital fact that the approximate weakly efficient solutions set of the piecewise convexification problem is an upper bound set of the weakly efficient solution set of the original multi-objective optimization problem.
\begin{theorem}\label{theorem:4.6} For any $\varepsilon >0$ there exists $t_{\varepsilon}\in \mathbb{N}_0$ such that $\cX_{wE}\subset \cX_{wE_{ap}}^{\varepsilon}(t)$ for any $t>t_{\varepsilon}$, where $\cX_{wE_{ap}}^{\varepsilon}(t)$ represents the $\varepsilon$-weakly efficient solution set of the piecewise convexification problem with respect to the subdivision $\mathbf{Y}^{t}$ of $X$.
\end{theorem}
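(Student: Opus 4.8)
The plan is to fix an arbitrary $\hat{x}\in\cX_{wE}$ and verify directly that it meets both requirements in the definition (\ref{2022031802}) of $\cX_{wE_{ap}}^{\varepsilon}(t)$: namely, that $\hat{x}$ lies in the base set $\bigcup_{k_t=1}^{2^t}\cX_{wE_{ap}^{k_t}}^{\varepsilon}$, and that no point $y$ of this base set satisfies $f(y)+\varepsilon e\prec f(\hat{x})$. Exactly as in Theorem \ref{thm:4.2}, I first choose $t_{\varepsilon}\in\mathbb{N}$ so that (\ref{eqnarry:1-11-2}) holds, i.e. $\max_{k_t\in[2^t]}\frac{\alpha}{2}\|\frac{b^{k_t}-a^{k_t}}{2}\|^2<\varepsilon$ for all $t>t_{\varepsilon}$, and work with such a $t$ throughout; this is the place where $|T(t)|\to 0$ enters.

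For the membership claim, since $X=\bigcup_{k_t=1}^{2^t}Y^{k_t}$ I pick a subinterval $Y^{k_t}$ with $\hat{x}\in Y^{k_t}$ and argue $\hat{x}\in\cX_{wE_{ap}^{k_t}}^{\varepsilon}$ by contradiction. Suppose some $y\in Y^{k_t}$ gives $f^{k_t}(y)+\varepsilon e\prec f^{k_t}(\hat{x})$. The idea is to transfer this $\varepsilon$-domination in the convexified objective into a strict domination in the original one. Proposition \ref{proposition:3.1}(i) at $\hat{x}$ gives $f^{k_t}_j(\hat{x})\le f_j(\hat{x})$, while Proposition \ref{proposition:3.1}(ii) at $y$ bounds the relaxation gap by $f_j(y)-f^{k_t}_j(y)\le\frac{\alpha_j^{k_t}}{2}\|\frac{b^{k_t}-a^{k_t}}{2}\|^2$, which is $<\varepsilon$ after using $\alpha_j^{k_t}\le\alpha$ from (\ref{eqna: 3.2}) and the choice of $t$. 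Chaining these bounds with the hypothesis produces $f_j(y)\le f^{k_t}_j(y)+\varepsilon< f^{k_t}_j(\hat{x})\le f_j(\hat{x})$ for every $j\in[p]$, hence $f(y)\prec f(\hat{x})$ with $y\in Y^{k_t}\subseteq X$, contradicting $\hat{x}\in\cX_{wE}$.

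For the non-domination claim, I suppose some $y\in\bigcup_{k_t=1}^{2^t}\cX_{wE_{ap}^{k_t}}^{\varepsilon}$ satisfies $f(y)+\varepsilon e\prec f(\hat{x})$; since $\varepsilon>0$ this already forces $f(y)\prec f(\hat{x})$, and $y\in X$, again contradicting $\hat{x}\in\cX_{wE}$. This second step is immediate, so the substance of the proof lies in the membership step.

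The main obstacle is precisely that membership step — establishing $\hat{x}\in\cX_{wE_{ap}^{k_t}}^{\varepsilon}$ — where one must move between the convexified objective $f^{k_t}$ and the original $f$ using the two-sided $\alpha{\rm BB}$ estimates of Proposition \ref{proposition:3.1} together with the smallness of the subdivision guaranteed by $t>t_{\varepsilon}$. The only point demanding care is keeping the strict and non-strict inequalities aligned through the chain so that the final relation is the strict $f(y)\prec f(\hat{x})$ needed to contradict weak efficiency.
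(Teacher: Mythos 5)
Your proposal is correct and follows essentially the same route as the paper: the paper also fixes $t_\varepsilon$ via (\ref{eqnarry:1-11-2}) and splits the failure of $\hat{x}\in\cX_{wE_{ap}}^{\varepsilon}(t)$ into the same two cases (non-membership in some $\cX_{wE_{ap}^{k_t}}^{\varepsilon}$, handled by transferring $f^{k_t}(\hat y)+\varepsilon e\prec f^{k_t}(\hat x)$ back to $f(\hat y)\prec f(\hat x)$ via Proposition \ref{proposition:3.1}; or $\varepsilon$-domination within the union, which immediately contradicts $\hat x\in\cX_{wE}$). Your write-up is merely phrased as a direct verification of the two defining conditions rather than as a contradiction organized through Remark \ref{remark:3.1}, and it spells out the chain of inequalities that the paper leaves implicit.
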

\begin{proof}
For any $\varepsilon>0$, we can also take $t_{\varepsilon}$ as in (\ref{eqnarry:1-11-2}). In the following, we suppose that there exist $t_0>t_{\varepsilon}$ and $\hat{x}\in X$ such that $\hat{x}\in\cX_{wE}$ and $\hat{x}\notin \cX_{wE_{ap}}^{\varepsilon}(t_0)$.

Without loss of generality, we assume that $\hat{x}\in Y^{k_{t_0}}$ for some $k_{t_0}$. From Remark \ref{remark:3.1}(i), $\hat{x}\notin\cX_{wE_{ap}}^{\varepsilon}(t_0)$ has two cases. The first of which is that $\hat{x}\in \cX_{wE_{ap}^{k_{t_0}}}^{\varepsilon}$ and there is $\hat{y}\in \bigcup\limits_{k_{n}=1}^{2^{t_0}}\cX_{wE_{ap}^{k_t}}^{\varepsilon}$ satisfying $f(\hat{y})+ {\varepsilon}e\prec f(\hat{x})$. Obviously, this case is a contradiction to the assumption $\hat{x}\in \cX_{wE}$. The other case is $\hat{x}\notin\cX_{wE_{ap}^{k_{t_0}}}^{\varepsilon}$, which shows that there exists  $\hat{y}\in Y^{k_{t_0}}$ satisfying $f^{k_{t_0}}(\hat{y})+ {\varepsilon}e\prec f^{k_{t_0}}(\hat{x})$.
Similarly, incorporating (\ref{align:1-11-1}) with Proposition \ref{proposition:3.1} (ii),  we can get
$f(\hat{y})\prec f(\hat{x}),$
which is a contradiction to the assumption $\hat{x}\in \cX_{wE}$ since $\hat{y}\in Y^{k_{t_0}}$. Hence, this statement is proved.
\end{proof}
\begin{remark}
It is not difficult to verify that the above statement also holds for efficient solutions , i.e., $\cX_{E}\subseteq\cX_{E_{ap}}^{\varepsilon}(t)$ under the same assumptions.
\end{remark}
Furthermore, we consider the relationship between two sets of the approximate solutions, that is, the relationship between $\cX_{wE}^{\delta}$ and $\cX_{wE_{ap}}^{\varepsilon}(t)$.
\begin{theorem}\label{theorem:4.8} For any $\varepsilon,\delta$ with $0<\delta<\varepsilon$ there exists $t_{\varepsilon,\delta}\in\mathbb{N}_0$ such that $\cX_{wE}^{\delta}\subseteq\cX_{wE_{ap}}^{\varepsilon}(t)$ for any $t>t_{\varepsilon,\delta}$, where $\cX_{wE}^{\delta}$ is the $\delta$-weakly efficient solution set of (MOP) and $\cX_{wE_{ap}}^{\varepsilon}(t)$ is the ${\varepsilon}$-weakly efficient solution set of the piecewise convexification problem w.r.t. the subdivision $\mathbf{Y}^{t}$ of $X$, respectively.
\end{theorem}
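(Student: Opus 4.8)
The plan is to mirror the contradiction argument of Theorem \ref{theorem:4.6}, but to exploit the extra slack $\varepsilon-\delta>0$ so that it can absorb the $\alpha{\rm BB}$ convexification error; note this is in fact a strengthening of Theorem \ref{theorem:4.6}, since $\cX_{wE}\subseteq\cX_{wE}^{\delta}$. First I would calibrate the subdivision level to the gap rather than to $\varepsilon$: because $\varepsilon-\delta>0$ and $|T(t)|\to 0$ as $t\to\infty$, there is $t_{\varepsilon,\delta}\in\mathbb{N}_0$ with $\max_{k_t\in[2^t]}\frac{\alpha}{2}\left\|\frac{b^{k_t}-a^{k_t}}{2}\right\|^2<\varepsilon-\delta$ for all $t>t_{\varepsilon,\delta}$, which is the analogue of (\ref{eqnarry:1-11-2}) with $\varepsilon$ replaced by $\varepsilon-\delta$. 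Then I argue by contradiction, assuming $\hat{x}\in\cX_{wE}^{\delta}$ but $\hat{x}\notin\cX_{wE_{ap}}^{\varepsilon}(t_0)$ for some $t_0>t_{\varepsilon,\delta}$, and I choose $k_{t_0}$ with $\hat{x}\in Y^{k_{t_0}}$.

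By Remark \ref{remark:3.1}(i) in its $\varepsilon$-version, the failure $\hat{x}\notin\cX_{wE_{ap}}^{\varepsilon}(t_0)$ splits into two cases. In the easy case $\hat{x}\in\cX_{wE_{ap}^{k_{t_0}}}^{\varepsilon}$ and there is $\hat{y}\in\bigcup_{k_t}\cX_{wE_{ap}^{k_t}}^{\varepsilon}\subseteq X$ with $f(\hat{y})+\varepsilon e\prec f(\hat{x})$; since $0<\delta<\varepsilon$ this immediately gives $f(\hat{y})+\delta e\prec f(\hat{y})+\varepsilon e\prec f(\hat{x})$ by componentwise monotonicity, contradicting $\hat{x}\in\cX_{wE}^{\delta}$. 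The substantive case is $\hat{x}\notin\cX_{wE_{ap}^{k_{t_0}}}^{\varepsilon}$, which, by the definition of the $\varepsilon$-weakly efficient set of $({\rm CMOP})^{k_{t_0}}$, furnishes $\hat{y}\in Y^{k_{t_0}}$ with $f^{k_{t_0}}(\hat{y})+\varepsilon e\prec f^{k_{t_0}}(\hat{x})$.

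The heart of the proof is to transfer this subproblem inequality back to the original objective $f$. Using Proposition \ref{proposition:3.1}(i) I have $f_j^{k_{t_0}}(\hat{x})\le f_j(\hat{x})$, and using the definition (\ref{eqn:(7)}) of $f_j^{k_{t_0}}$ together with Proposition \ref{proposition:3.1}(ii) I have $f_j(\hat{y})\le f_j^{k_{t_0}}(\hat{y})+\frac{\alpha_j^{k_{t_0}}}{2}\left\|\frac{b^{k_{t_0}}-a^{k_{t_0}}}{2}\right\|^2$ for each $j\in[p]$. Chaining these with the strict inequality $f_j^{k_{t_0}}(\hat{y})+\varepsilon<f_j^{k_{t_0}}(\hat{x})$ yields $f_j(\hat{y})+\varepsilon<f_j(\hat{x})+\frac{\alpha_j^{k_{t_0}}}{2}\left\|\frac{b^{k_{t_0}}-a^{k_{t_0}}}{2}\right\|^2$. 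Since $\alpha_j^{k_{t_0}}\le\alpha$ and $t_0>t_{\varepsilon,\delta}$, the error term is strictly below $\varepsilon-\delta$, so $\varepsilon$ minus that term exceeds $\delta$, giving $f_j(\hat{y})+\delta<f_j(\hat{x})$ for every $j$, i.e. $f(\hat{y})+\delta e\prec f(\hat{x})$ with $\hat{y}\in Y^{k_{t_0}}\subseteq X$ — again contradicting $\hat{x}\in\cX_{wE}^{\delta}$. Both cases closing, the inclusion $\cX_{wE}^{\delta}\subseteq\cX_{wE_{ap}}^{\varepsilon}(t)$ follows.

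I expect the main obstacle to be the bookkeeping in this last estimate: one must verify that it is precisely the gap $\varepsilon-\delta$ (and not the full $\varepsilon$) that is required to dominate the per-index error $\frac{\alpha_j^{k_{t_0}}}{2}\left\|\frac{b^{k_{t_0}}-a^{k_{t_0}}}{2}\right\|^2$, and that the resulting strict inequality holds uniformly over all $j\in[p]$. This is exactly where the hypothesis $\delta<\varepsilon$ enters and where the threshold $t_{\varepsilon,\delta}$ must be tuned to $\varepsilon-\delta$ rather than to $\varepsilon$; everything else is a routine adaptation of Theorem \ref{theorem:4.6}.
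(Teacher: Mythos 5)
Your proposal is correct and follows essentially the same route as the paper: the paper likewise calibrates $t_{\varepsilon,\delta}$ so that $\max_{k_t}\frac{\alpha}{2}\bigl\|\frac{b^{k_t}-a^{k_t}}{2}\bigr\|^2<\varepsilon-\delta$ and then declares the rest ``analogous to Theorem \ref{theorem:4.6}.'' Your write-up simply fills in the two-case contradiction argument that the paper leaves implicit, and your bookkeeping (the error term being absorbed by the gap $\varepsilon-\delta$ to yield $f(\hat{y})+\delta e\prec f(\hat{x})$) is exactly right.
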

\begin{proof}
Employing the definition of $\alpha$ and $|T(t)|\rightarrow 0$ as $t\rightarrow 0$, one can easily conclude that for any $\varepsilon >0$ and $0<\delta<\varepsilon$, there exists  $t_{\varepsilon,\delta}\in\mathbb{N}_0$ such that
\begin{eqnarray}\label{eqna:4.9}
\max\limits_{k_t\in[2^t]} \frac{\alpha}{2}\left\|\frac{b^{k_t}-a^{k_t}}{2}\right\|^2< (\varepsilon-\delta), \forall t>t_{\varepsilon,\delta}.
\end{eqnarray}
Our task now is to proof $\cX_{wE}^{\delta}\subseteq\cX_{wE_{ap}}^{\varepsilon}(t)$ for any $t>t_{\varepsilon,\delta}$. Obviously, the remainder of the argument is analogous to that in Theorem \ref{theorem:4.6}.
\end{proof}
\begin{remark} In the same way,  we infer that for any $\varepsilon, \delta$ with $0<\delta<\varepsilon$ there exists $t_{\varepsilon,\delta}\in\mathbb{N}_0$ such that $\cX_{E}^{\delta}\subseteq\cX_{E_{ap}}^{\varepsilon}(t)$ for any $t>t_{\varepsilon,\delta}$.
\end{remark}
We now turn to develop the convergence of $\cX_{E_{ap}}^{\delta}(t)$ and $\cX_{wE_{ap}}^{\delta}(t)$ and point out that these results are more useful in the actual calculation.
\begin{theorem}\label{theorem:4.9} For any $\varepsilon, \delta$ with $0<\delta<\varepsilon$ there exists $t_{\varepsilon,\delta}\in\mathbb{N}_0$ such that $\cX_{wE_{ap}}^{\delta}(t)\subseteq\cX_{wE}^{\varepsilon}$ for any $t>t_{\varepsilon,\delta}$, where $\cX_{wE}^{\varepsilon}$ is the $\varepsilon$-weakly efficient solution set of (MOP) and $\cX_{wE_{ap}}^{\delta}(t)$ is the ${\delta}$-weakly efficient solution set of the piecewise convexification problem w.r.t. the subdivision $\mathbf{Y}^{t}$ of $X$, respectively.
\end{theorem}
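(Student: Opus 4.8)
The plan is to argue by contradiction along the lines of Theorem \ref{thm:4.2}, the only novelty being that the tolerance $\varepsilon$ is split as $\delta+(\varepsilon-\delta)$, so that the part $\varepsilon-\delta$ absorbs the convexification error while the part $\delta$ certifies membership in $\cX_{wE_{ap}}^{\delta}$. First I would use $|T(t)|\to 0$ and the definition of $\alpha$ to fix $t_{\varepsilon,\delta}\in\mathbb{N}_0$ with (\ref{eqna:4.9}) holding, namely $\max_{k_t\in[2^t]}\frac{\alpha}{2}\|\frac{b^{k_t}-a^{k_t}}{2}\|^2<\varepsilon-\delta$ for all $t>t_{\varepsilon,\delta}$; this is exactly the threshold already produced in Theorem \ref{theorem:4.8}. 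Then, assuming for contradiction that $\hat{x}\in\cX_{wE_{ap}}^{\delta}(t_0)$ but $\hat{x}\notin\cX_{wE}^{\varepsilon}$ for some $t_0>t_{\varepsilon,\delta}$, the failure of $\varepsilon$-weak efficiency yields $\hat{y}\in Y^{j_{t_0}}\subseteq X$ for some $j_{t_0}$ with $f(\hat{y})+\varepsilon e\prec f(\hat{x})$.

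Next I would split into two cases according to whether $\hat{y}$ is a $\delta$-weakly efficient solution of $({\rm CMOP})^{j_{t_0}}$. If $\hat{y}\in\cX_{wE_{ap}^{j_{t_0}}}^{\delta}$, then $\delta<\varepsilon$ gives $f(\hat{y})+\delta e\prec f(\hat{x})$, which contradicts $\hat{x}\in\cX_{wE_{ap}}^{\delta}(t_0)$ because $\hat{y}\in\bigcup_{k_t}\cX_{wE_{ap}^{k_t}}^{\delta}$. Otherwise $\hat{y}\notin\cX_{wE_{ap}^{j_{t_0}}}^{\delta}$, hence a fortiori $\hat{y}\notin\cX_{wE_{ap}^{j_{t_0}}}$, so Lemma \ref{lemma:2.6}(ii) applied on the subinterval produces $\nu\in\cX_{wE_{ap}^{j_{t_0}}}\subseteq\cX_{wE_{ap}^{j_{t_0}}}^{\delta}$ with $f^{j_{t_0}}(\nu)\prec f^{j_{t_0}}(\hat{y})$. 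Chaining this with Proposition \ref{proposition:3.1}(i) and the relation $f(\hat{y})+\varepsilon e\prec f(\hat{x})$ yields $f(\nu)+\boldsymbol{\tau}+\varepsilon e\prec f(\hat{x})$, where $\tau_i=\frac{\alpha_i^{j_{t_0}}}{2}\langle a^{j_{t_0}}-\nu,b^{j_{t_0}}-\nu\rangle$.

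The crux is the estimate of $\boldsymbol{\tau}$: combining Proposition \ref{proposition:3.1}(ii) with (\ref{eqna: 3.2}) and (\ref{eqna:4.9}) gives $\tau_i>-(\varepsilon-\delta)$, hence $\tau_i+\varepsilon>\delta$ for every $i$, which upgrades the previous relation to $f(\nu)+\delta e\prec f(\hat{x})$ and again contradicts $\hat{x}\in\cX_{wE_{ap}}^{\delta}(t_0)$, since $\nu\in\cX_{wE_{ap}^{j_{t_0}}}^{\delta}$. I expect the only delicate point to be this bookkeeping of the two tolerances—reserving exactly $\varepsilon-\delta$ for the error bound rather than the full $\varepsilon$ used in Theorem \ref{thm:4.2}—after which the remaining chain of strict and non-strict inequalities transcribes directly from the earlier proof, so no genuinely new idea is required.
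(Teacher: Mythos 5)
Your proposal is correct and follows essentially the same route as the paper: the same threshold (\ref{eqna:4.9}) reserving $\varepsilon-\delta$ for the convexification error, the same two-case split on whether $\hat{y}\in\cX_{wE_{ap}^{j_{t_0}}}^{\delta}$, and the same use of Lemma \ref{lemma:2.6}(ii) plus Proposition \ref{proposition:3.1}(ii) to pass from $\hat{y}$ to $\nu$ and downgrade the strict inequality to the $\delta$-level. The only cosmetic difference is that the paper absorbs the $\varepsilon-\delta$ shift into its definition of $\tau_i$ while you keep it separate; the logic is identical.
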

\begin{proof}
Similarly, we can carry out (\ref{eqna:4.9}). If the statement was false, then there exist  $t_0>t_{\varepsilon,\delta}$ and $\hat{x}\in X$ satisfying $\hat{x}\in\cX_{wE_{ap}}^{\delta}(t_0)$ and $\hat{x}\notin\cX_{wE}^{\varepsilon}$. Then $\hat{x}\in\cX_{wE_{ap}}^{\delta}(t_0)$ implies that $\hat{x}\in\cX_{wE_{ap}^{k_{t_0}}}^{\delta}\subseteq \bigcup\limits_{k_t=1}^{2^{t_0}}\cX_{wE_{ap}^{k_{t}}}^{\delta}$ for some $k_{t_0}$ and there does not exist $y\in\bigcup\limits_{k_n=1}^{t_0}\cX_{wE_{ap}^{k_{n}}}^{\delta}$ satisfying  $f(y)+{\delta}e\prec f(\hat{x})$.
However, since $\hat{x}\notin\cX_{wE}^{\varepsilon}$, one can take $\hat{y}\in X$ satisfying
\begin{eqnarray}\label{eqna:4.13}
f(\hat{y})+{\varepsilon}e\prec f(\hat{x}).
\end{eqnarray}
In the same way, there exists $j_{t_0}$ such that $\hat{y}\in Y^{j_{t_0}}\subset \mathbf{Y}^{t}$,  which only has two cases. The first case is $\hat{y}\in\cX_{wE_{ap}^{j_{t_0}}}^{\delta}\subset Y^{j_{t_0}}$. According to $0<\delta<\varepsilon$, (\ref{eqna:4.13}) indicates that $f(\hat{y})+{\delta}e\prec f(\hat{x})$, which shows that $\hat{x}\notin\cX_{wE_{ap}}^{\delta} (t_0)$. This is a contradiction to the assumption $\hat{x}\in\cX_{wE_{ap}}^{\delta}(t_0)$.

However, the second case is $\hat{y}\in Y^{j_{t_0}}\setminus\cX_{wE_{ap}^{j_{t_0}}}^{\delta}$. From Lemma \ref{lemma:2.6} (ii) there exists $\nu\in\cX_{wE_{ap}^{j_{t_0}}}\subset\cX_{wE_{ap}^{j_{t_0}}}^{\delta}$ such that $f^{j_{t_0}}(\nu)\prec f^{j_{t_0}}(\hat{y})$. From the definition of $f_{i}^{k_n}$ and (\ref{eqna:4.13}), it is equivalent to
\begin{eqnarray}\label{eqna:4.14}
f(\nu)+{\delta}e+\boldsymbol{\tau}\prec f(\hat{x}),
\end{eqnarray}
where $\boldsymbol{\tau}=(\tau_1,\cdots,\tau_p)$ and $\tau_{i}=\frac{\alpha_{i}^{j_{t_0}}}{2}\langle a^{j_{t_0}}-\nu, b^{j_{t_0}}-\nu\rangle+\varepsilon-\delta$. And since $t_0>{t}_{\varepsilon}$ and (\ref{eqna:4.9}) we can see that $\tau_{i}>0$ for any $i=1,\cdots,p$.
Thus (\ref{eqna:4.14}) shows that
$f(\nu)+{\delta}e\prec f(\hat{x}),$ and then
it follows that $\hat{x}\notin\cX_{wE_{ap}}^{\delta}(t_0)$ by $\nu\in\cX_{wE_{ap}^{j_{t_0}}}^{\delta}$. It is a confliction to the assumption and completes the proof.
\end{proof}
\begin{remark}
(i) Combining Theorem \ref{thm:4.2} with Theorem \ref{theorem:4.8}, it shows that $\cX_{wE_{ap}}(t)\subset\cX_{wE}^{\delta}\subset\cX_{wE_{ap}}^{\varepsilon}(t)$ for any $\varepsilon, \delta$ satisfying $0<\delta<\varepsilon$ under some assumptions. Hence, we can apply two sets, i.e., $\cX_{wE_{ap}}(t)$ and $\cX_{wE_{ap}}^{\varepsilon}(t)$, to approximate the set $\cX_{wE}^{\delta}$. The form of this containment relation is similar to one of the squeeze theorem. (ii) Theorems \ref{theorem:4.8} and \ref{theorem:4.9} indicate that for given $\varepsilon$ and any $\eta$ with $0<\eta<\varepsilon$, one can yield to $\cX_{wE_{ap}}^{(\varepsilon-\eta)}(t)\subset\cX_{wE}^{\varepsilon}\subset \cX_{wE_{ap}}^{(\varepsilon+\eta)}(t)$ under some assumptions. Obviously, the result of bilateral inclusion we established more intuitively reflects the relationship of approximation.
\end{remark}

As mentioned in the previous section, Lemmas \ref{lemma:CC} and \ref{lemma:BB} show that when all possible weight vectors are taken, the set $\cX_{E_{ap}^{k_t}}(k_{t}\in[2^{t}])$ can be obtained by solving the weighted sum optimization problem for a convex multi-objective optimization problem. Obviously, it is impossible to find all the weight vectors in numerical experiments, thus (\ref{E:1}) and (\ref{2022031801}) cannot be calculated numerically, namely, Theorems \ref{thm1:4.3} and \ref{theorem:4.6}, or Theorems \ref{thm:4.2} and \ref{theorem:4.8} are only well-structured theoretical results. In order to calculate numerically, we add another constraint condition about the width of box in the design of the algorithm and prove that this algorithm can generate $\varepsilon$-efficient solutions.

{\section{Algorithm and Convergence}}
\noindent
In this section, we propose a piecewise  convexification algorithm for finding approximate efficient solutions of (MOP) and develop its convergence. Before proceeding further, let us consider  the following subproblem which need to be solved in our piecewise convexification algorithm:
\begin{align}\label{align:1-12-1}
\min\limits_{x\in Y^{k_t}} \sum\limits_{i=1}^{p} \lambda_{i}f_{i}^{k_t}(x),
\end{align}
where $\boldsymbol{\lambda}=(\lambda_{1},\cdots,\lambda_{p})\in\mathbb{R}_{++}^{p}$ is a weight vector with $\sum\limits_{j=1}^{p}\lambda_{j}=1$. This subproblem (\ref{align:1-12-1}) is referred to as the weighted sum optimization problem of the convex relaxation sub-multiobjective optimization problem $(({\rm CMOP})^{k_t})$ on $Y^{k_t}\in\mathbf{Y}^{t}$ for the subdivision $\mathbf{Y}^{t}$ of $X$. Moreover, the width of a box $Y^{k_t}=[a^{k_t}, b^{k_t}]$ is defined by $\omega(Y^{k_t}):=\|b^{k_t}-a^{k_t}\|$. Now, we turn to discuss this piecewise  convexification algorithm.
\begin{algorithm}[thb]
\captionsetup{font={scriptsize}}
\scriptsize
\caption{\scriptsize{The Piecewise Convexification Algorithm of {\rm(MOP)}}.}
\label{alg:1}
\hspace*{0.02in} {\bf Input:} %算法的输入参数：Input   and $\boldsymbol{\omega}\in\mathbb{R}_{++}^{p}$
$X=\prod\limits_{i=1}^{m}[a_i,b_i]$, $f\in\mathbb{C}^2(\mathbb{R}^n,\mathbb{R}),\varepsilon>0$, $\boldsymbol{\lambda}\in\mathbb{R}_{++}^{p}$, $\mathcal{L}_1=\emptyset$ and $\mathcal{A}=\emptyset$.\\
\hspace*{0.02in} {\bf Output:}
$\widetilde{\cX}_{E_{ap}}(t_0)$ and $\mathcal{L}_{1}.$
\begin{algorithmic}[1] %这个1 表示每一行都显示数字
\State Compute $\alpha^{0}:=\{\alpha_{1}^{0},\cdots,\alpha_{p}^{0}\}$ by (\ref{2022032901}) on $X^{0}$ and let $\widetilde{\alpha}=\max\{\alpha^{0}\}+0.01$. \label{line:1}
\State Estimate the minimum number of divisions $t_{\varepsilon}$ satisfying
\begin{align}\label{2022032902}
\max\limits_{k_t\in\{1,\cdots,2^{t_{\varepsilon}}\}}\dfrac{\widetilde{\alpha}}{8}\omega(Y^{k_t})^2\leq \frac{\varepsilon}{8}.
\end{align}\label{line:3}
\State Let $t_0\geq t_{\varepsilon}$ and obtain the subdivision $\mathbf{Y}^{t_0}$ of $X^0$. Let $\mathcal{L}:=\mathbf{Y}^{t_0}$. \label{line:4}

\While{$\mathcal{L}\neq \emptyset$}

\State Select a box $Y^{k_t}$ from $\mathcal{L}$ and  delete it from $\mathcal{L}$.

\State Construct $({\rm CMOP})^{k_t}$ according to {\rm(\ref{eqn:(7)})}. \label{line:5}

\State For any $j\in[p]$, let $L_{j}^{k_t}>0$ be chosen such that $L_{j}^{k_t}\geq \sqrt{m}\left|\frac{\partial}{\partial x_{i}}f_{j}\right|$ for any $i\in[p]$ and $x\in Y^{k_t}$. Let \label{line:2}
\begin{align}\label{20220406}
\widetilde{L}^{k_t}: = \min\limits_{j\in[p]}\left\{ -\frac{4L_j^{k_t}}{\tilde{\alpha}}+\sqrt{\dfrac{2\varepsilon}{\tilde{\alpha}}+\dfrac{16(L_{j}^{k_t})^{2}}{\tilde{\alpha}^2}}\right\}
\end{align}

\If{$\omega(Y^{k_t})\leq\widetilde{L}^{k_t}$}

\State Solve the subproblem {\rm(\ref{align:1-12-1})} with $\boldsymbol{\lambda}$. Let $x^{k_t, *}$ be an optimal solution and $\widetilde{\cX}_{E_{ap}^{k_t}}=\{x^{k_t,*}\}$. \label{line:6}

\State Store $Y^{k_t}$ in $\mathcal{L}_1$ and $\{x^{k_t,*}\}$ in $\mathcal{A}$.

\Else
\State Divide $Y^{k_t}$ by the direction of maximum width and obtain $Y^{k_t,1}$ and $Y^{k_t,2}$.

\State Store $Y^{k_t,1}$ and $Y^{k_t,2}$ in $\mathcal{L}$.
\EndIf
\EndWhile

\State $\widetilde{\cX}_{E_{ap}}=\left\{x\in \mathcal{A}: \nexists~y\in \mathcal{A}~\text{s.t.~}f(y)\preceq f(x)\right\}$. \label{line:7}
\end{algorithmic}
\end{algorithm}

Let us describe the settings of Algorithm \ref{alg:1} for details.
%\begin{itemize}
%\item[]
\textbf{(1)} In fact, the width of a box tends to $0$ as the number of subdivisions increases, that is, the condition $\omega(Y^{k_t})\leq\widetilde{L}^{k_t}$ must be satisfied. Thus, the While-loop is finite and  Algorithm \ref{alg:1} terminates. \textbf{(2)} In line \textbf{\ref{line:3}}, the minimum number of subdivisions $t_{\varepsilon}$ is estimated such that the width of all boxes satisfy condition (\ref{2022032902}), that is, $\max\limits_{k_t\in\{1,\cdots,2^{t_{\varepsilon}}\}}\frac{\widetilde{\alpha}}{8}\omega(Y^{k_t})^2\leq \frac{\varepsilon}{8}$. However, Theorems \ref{thm:4.2}-\ref{theorem:4.9} require the condition  $\max\limits_{k_t\in\{1,\cdots,2^{t_{\varepsilon}}\}}\frac{\widetilde{\alpha}}{8}\omega(Y^{k_t})^2\leq \varepsilon.$
%\item[]
\textbf{(3)} In line \textbf{\ref{line:4}}, $t_0$ is the number of divisions we set and this subdivision $\mathbf{Y}^{t_{0}}$ also satisfies condition (\ref{2022032902}).
%\item[]
\textbf{(4)} In line \textbf{\ref{line:5}}, we adopt the interval arithmetic method  in \cite{floudas2013deterministic,hansen2003global} to calculate the minimum eigenvalue $\lambda_{min}^{i}(x)$  and construct $({\rm CMOP})^{k_t}$.
%\item[]
\textbf{(5)} In  line \textbf{\ref{line:2}}, $L_j^{k_t}$ can be calculated by the interval arithmetic, see \cite{fernandez2009obtaining}. In what follows, we use this $L_{j}^{k_t}$ to bound the distance between any function value $f_{j}(x)$ on $\ Y^{k_t}$ and the optimal value of its convex relaxation function $f_{j}^{k_t}(x_{j}^{*})$ where $f_{j}^{k_t}(x_{j}^{*})=\min\limits_{x\in Y^{k_t}}f_{j}^{k_t}(x)$.  From the following theoretical analysis of this algorithm, conditions (\ref{2022032902}) and (\ref{20220406}) together guarantee that the $\varepsilon$-efficient solutions of the original problem can still be obtained when only computing a finite subset of the efficient solution set of the multi-objective convex subproblem on each sub-box. It is slightly different from the previous Theorems$\ $\ref{thm:4.2}-\ref{theorem:4.9} which only require a condition similar to (\ref{2022032902}).
%\item[]
\textbf{(6)} In line \textbf{\ref{line:6}}, $({\rm CMOP})^{k_t}$ is solved by the weighted sum method. According to Lemma \ref{lemma:CC}, it follows that the optimal solution $x^{k_t,*}$ of the weighted sum method with $\boldsymbol{\lambda}\in \mathbb{R}_{++}^{p}$ is an efficient solution of $({\rm CMOP})^{k_t}$, that is, $\widetilde{\cX}_{E_{ap}^{k_t}}=\{x^{k_t,*}\}\subseteq \cX_{E_{ap}^{k_t}}$ for any $k_t$.  Furthermore, this sub-box on which the weighted sum optimization problem is solved is stored in $\mathcal{L}_1$.  Obviously, when the algorithm terminates, all boxes in $\mathcal{L}_1$ just make up the subdivision of $X$.
%Obviously, the union set $\bigcup\limits_{k_t=1}^{2^{t_0}}\widetilde{\cX}_{E_{ap}^{k_t}}$ is a finite set and $\left|\bigcup\limits_{k_t=1}^{2^{t_0}}\widetilde{\cX}_{E_{ap}^{k_t}}\right|=2^{t_0}$.
%\end{itemize}
%This modified definition way calculates finite subsets of the multi-objective sub-problems only and overcomes the disadvantage that (\ref{E:1}) cannot be calculated numerically. Subsequently, we will verify that Algorithm \ref{alg:1} computes a subset of all globally approximate efficient solutions of (MOP).
%
%In other words, this modified way of convex underestimation .

Applying conditions (\ref{2022032902}) and (\ref{20220406}), we present the following analogous lemma, which was also used in \cite{niebling2019branch}. It is one of the key tools to analyze $\widetilde{\cX}_{E_{ap}}\subseteq \cX_{E}^{\varepsilon}$.
\begin{lemma}\label{lemma2022033001}
Let a box $X\in \mathbb{I}^{n}$, a twice continuously differentiable function $f:\mathbb{R}^{n}\rightarrow \mathbb{R}$, a sufficiently small positive number $\delta>0$, a constant $\alpha\geq \max\{0,-\min\limits_{x\in X}\lambda_{\min}(x)\}+\delta$, and a positive scalar $\varepsilon>0$ be given. Let $Y^{k_t}=[a^{k_t},b^{k_t}]\subseteq X$ be a sub-box and let $L^{k_t}>0$ be chosen such that $L^{k_t}>\sqrt{m}\left|\frac{\partial}{\partial x_{i}}f\right|$ for all $i\in[m]$ and $x\in Y^{k_t}$. If the width of $Y^{k_t}$ satisfies
\begin{align*}
\omega(Y^{k_t})\leq -\frac{4L^{k_t}}{\alpha} + \sqrt{\frac{(4L^{k_t})^{2}}{\alpha^{2}}+\frac{2\varepsilon}{\alpha}},
\end{align*}
then $|f(x)-\nu|\leq \dfrac{\varepsilon}{4}$ for all $x\in Y^{k_t}$ where $f^{k_t}(x):=f(x)+\dfrac{\alpha}{2}\sum\limits_{i=1}^{m}(a_{i}^{k_t}-x_{i})(b_{i}^{k_t}-x_{i})$ and $\nu:=\min\limits_{x\in Y^{k_t}}f^{k_t}(x)$. Furthermore, if $\dfrac{\alpha}{8}\omega(Y^{k_t})^2\leq \dfrac{\varepsilon}{8}$, then $|f^{k_t}(x)-\nu|\leq \dfrac{3\varepsilon}{8}$ for all $x\in Y^{k_t}$.
\end{lemma}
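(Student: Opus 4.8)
The plan is to prove the two estimates in turn: the first from a Lipschitz bound on $f$ together with two-sided control of the $\alpha$BB error term, and the second as a short corollary. First I would convert the hypothesis on $L^{k_t}$ into a Lipschitz estimate. Since $L^{k_t}>\sqrt{m}\,\bigl|\partial f/\partial x_i\bigr|$ for every $i\in[m]$ and every $x\in Y^{k_t}$, summing squares over the coordinates gives $\|\nabla f(x)\|_2\le L^{k_t}$ on $Y^{k_t}$, hence $|f(x)-f(y)|\le L^{k_t}\|x-y\|_2$ there; as any two points of the box $[a^{k_t},b^{k_t}]$ lie within Euclidean distance $\|b^{k_t}-a^{k_t}\|_2=\omega(Y^{k_t})$, this yields $|f(x)-f(y)|\le L^{k_t}\,\omega(Y^{k_t})$. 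In parallel I would bound the error term $E(x):=\frac{\alpha}{2}\sum_{i=1}^{m}(a_i^{k_t}-x_i)(b_i^{k_t}-x_i)$: each factor pair has opposite signs on the box, so $E(x)\le 0$, while Proposition \ref{proposition:3.1}(ii) gives $\sum_i(a_i^{k_t}-x_i)(b_i^{k_t}-x_i)\ge -\bigl\|(b^{k_t}-a^{k_t})/2\bigr\|_2^2=-\tfrac14\omega(Y^{k_t})^2$; together these give $|E(x)|\le\tfrac{\alpha}{8}\omega(Y^{k_t})^2$.

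To obtain the first estimate, let $x^\ast$ minimise $f^{k_t}$, so $\nu=f(x^\ast)+E(x^\ast)$, and for arbitrary $x\in Y^{k_t}$ the triangle inequality gives
\[
|f(x)-\nu|\le |f(x)-f(x^\ast)|+|E(x^\ast)|\le L^{k_t}\,\omega(Y^{k_t})+\tfrac{\alpha}{8}\omega(Y^{k_t})^2.
\]
The main point—and the one step needing genuine care—is to recognise the width hypothesis as exactly the condition making this right-hand side at most $\varepsilon/4$. Writing $\omega=\omega(Y^{k_t})$, the inequality $\tfrac{\alpha}{8}\omega^2+L^{k_t}\omega-\tfrac{\varepsilon}{4}\le 0$ is quadratic in $\omega$; multiplying by $8/\alpha$ and applying the quadratic formula shows its positive root is precisely $-\tfrac{4L^{k_t}}{\alpha}+\sqrt{\tfrac{(4L^{k_t})^2}{\alpha^2}+\tfrac{2\varepsilon}{\alpha}}$, which is exactly the assumed upper bound on $\omega$. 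Hence $|f(x)-\nu|\le\varepsilon/4$.

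For the ``furthermore'' claim I would add the error term back in. Under the additional hypothesis $\tfrac{\alpha}{8}\omega^2\le\tfrac{\varepsilon}{8}$ the error bound above reads $|E(x)|\le\varepsilon/8$, so decomposing $f^{k_t}(x)-\nu=(f(x)-\nu)+E(x)$ and invoking the estimate just proved gives
\[
|f^{k_t}(x)-\nu|\le |f(x)-\nu|+|E(x)|\le\tfrac{\varepsilon}{4}+\tfrac{\varepsilon}{8}=\tfrac{3\varepsilon}{8},
\]
as required. Beyond the algebraic identification of the quadratic root with the stated width threshold, every step is a routine triangle-inequality estimate, so I expect no serious difficulty.
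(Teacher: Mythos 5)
Your proof is correct and follows essentially the same route as the paper's: the two\-/sided bound $|f(x)-f^{k_t}(x)|\le\frac{\alpha}{8}\omega(Y^{k_t})^2$ from Proposition \ref{proposition:3.1}(ii), a Lipschitz estimate extracted from $L^{k_t}$, identification of the width threshold as the positive root of $\frac{\alpha}{8}\omega^2+L^{k_t}\omega-\frac{\varepsilon}{4}=0$, and a closing triangle inequality. The one substantive difference is completeness: the paper only derives the error bound and the final triangle inequality explicitly, deferring the entire first estimate to Lemma 2.3 of Niebling and Eichfelder, whereas you supply that argument in full, and your quadratic\-/root computation is exactly right. Your derivation also settles an internal inconsistency in the paper's proof, which at one point asserts $|f(x)-\nu|\le\frac{\varepsilon}{8}$: that must be a typo for $\frac{\varepsilon}{4}$ (as in the statement), since the concluding estimate $\frac{\varepsilon}{4}+\frac{\varepsilon}{8}=\frac{3\varepsilon}{8}$ only goes through with the $\frac{\varepsilon}{4}$ bound, which is the one your width condition actually delivers.
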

\begin{proof}
Using the definition of $f^{k_t}$, we can derive a bound between $f$ and $f^{k_t}$, that is,
\begin{align*}
\left|f(x)-f^{k_t}(x)\right|=\left|\frac{\alpha}{2}\sum\limits_{i=1}^{n}(a_{i}^{k_t}-x_{i})(b_{i}^{k_t}-x_{i})\right|\leq\frac{\alpha}{8}\|b^{k_t}-a^{k_t}\|^2=\frac{\alpha}{8}\omega(Y^{k_t})^2, \forall x\in Y^{k_t}.
\end{align*}
The proof of $|f(x)-\nu|\leq \frac{\varepsilon}{8}$ is identical to the proof of Lemma 2.3 in \cite{niebling2019branch} and will be omitted here. Next, we verify the second statement in the above theorem, that is, $|f^{k_t}(x)-\nu|\leq \dfrac{3\varepsilon}{8}$. Combining $|f(x)-\nu|\leq \dfrac{\varepsilon}{8}$ with the
condition $\dfrac{\alpha}{8}\omega(Y^{k_t})^2\leq \dfrac{\varepsilon}{8}$,  for all $x\in Y^{k_t}$ it is easy to see that
\begin{align*}
|f^{k_t}(x)-\nu|\leq|f^{k_t}(x)-f(x)| + |f(x)-\nu|\leq \dfrac{3\varepsilon}{8}.
\end{align*}
It is now obvious that the lemma holds.
\end{proof}
We conclude this section to briefly discuss the convergence of this algorithm, that is, Algorithm \ref{alg:1} generates a subset of all globally approximate efficient solutions of (MOP) by calculating a finite subset of the efficient solution set of the multi-objective sub-problems only.
\begin{theorem}
Let $\widetilde{\cX}_{E_{ap}}$ be the set generated by Algorithm$\ $\ref{alg:1}. Then $\ \widetilde{\cX}_{E_{ap}}\subseteq \cX_{E}^{\varepsilon}.$
%\begin{align*}
%\widetilde{\cX}_{E_{ap}}\subseteq \cX_{E}^{\varepsilon}.
%\end{align*}
\end{theorem}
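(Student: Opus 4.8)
The plan is to argue by contradiction: if some $\hat{x}\in\widetilde{\cX}_{E_{ap}}$ failed to be an $\varepsilon$-efficient solution of (MOP), then the algorithm would already have stored in $\mathcal{A}$ a point dominating $\hat{x}$, which contradicts the final filtering step. First I would fix $\hat{x}\in\widetilde{\cX}_{E_{ap}}$ and suppose $\hat{x}\notin\cX_{E}^{\varepsilon}$. By the definition of $\cX_{E}^{\varepsilon}$ there is then some $\hat{y}\in X$ with $f(\hat{y})+\varepsilon e\preceq f(\hat{x})$, and in particular $f_{j}(\hat{y})+\varepsilon\leq f_{j}(\hat{x})$ for every $j\in[p]$.

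The second step is to locate $\hat{y}$ inside the partition produced by the algorithm. When the While-loop terminates, the boxes retained in $\mathcal{L}_{1}$ constitute a subdivision of $X$, so $\hat{y}$ lies in some terminal box $Y^{j_{t_0}}\in\mathcal{L}_{1}$, and the associated weighted-sum optimizer $x^{j_{t_0},*}$ stored in $\mathcal{A}$ belongs to $Y^{j_{t_0}}$. Each terminal box passed the test $\omega(Y^{j_{t_0}})\leq\widetilde{L}^{j_{t_0}}$, so by the form of $\widetilde{L}^{j_{t_0}}$ in (\ref{20220406}) the width condition of Lemma~\ref{lemma2022033001} holds for every objective $f_{j}$ with the constant $\widetilde{\alpha}$ simultaneously, precisely because $\widetilde{L}^{j_{t_0}}$ is taken as the minimum over $j\in[p]$. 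Moreover $\widetilde{\alpha}=\max_{j}\alpha_{j}^{0}+0.01$ satisfies the hypothesis $\widetilde{\alpha}\geq\max\{0,-\min_{x\in X}\lambda_{\min}^{j}(x)\}+\delta$ with $\delta=0.01$ for each $j$.

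The key estimate is then obtained by applying the first assertion of Lemma~\ref{lemma2022033001} to each $f_{j}$ on $Y^{j_{t_0}}$ with $\alpha=\widetilde{\alpha}$. Writing $\nu_{j}:=\min_{x\in Y^{j_{t_0}}}\big(f_{j}(x)+\frac{\widetilde{\alpha}}{2}\sum_{i=1}^{m}(a_{i}^{j_{t_0}}-x_{i})(b_{i}^{j_{t_0}}-x_{i})\big)$, the lemma gives $|f_{j}(x)-\nu_{j}|\leq\frac{\varepsilon}{4}$ for all $x\in Y^{j_{t_0}}$. Since the relaxed objective never exceeds $f_{j}$ on the box, $\nu_{j}\leq f_{j}(\hat{y})$; evaluating the uniform bound at the stored point $x^{j_{t_0},*}\in Y^{j_{t_0}}$ then yields
\[
f_{j}(x^{j_{t_0},*})\leq\nu_{j}+\tfrac{\varepsilon}{4}\leq f_{j}(\hat{y})+\tfrac{\varepsilon}{4}\leq f_{j}(\hat{x})-\tfrac{3\varepsilon}{4}<f_{j}(\hat{x}),\qquad j\in[p].
\]
Thus $f(x^{j_{t_0},*})\prec f(\hat{x})$, which forces $x^{j_{t_0},*}\neq\hat{x}$ and $f(x^{j_{t_0},*})\preceq f(\hat{x})$ with $x^{j_{t_0},*}\in\mathcal{A}$. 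This contradicts $\hat{x}\in\widetilde{\cX}_{E_{ap}}=\{x\in\mathcal{A}:\nexists\,y\in\mathcal{A}\ \text{s.t.}\ f(y)\preceq f(x)\}$, completing the argument.

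I expect the main obstacle to be the mismatch between the relaxation constant $\alpha_{j}^{k_t}$ actually used to build $({\rm CMOP})^{k_t}$ in (\ref{eqn:(7)}) and the global constant $\widetilde{\alpha}$ appearing both in the width test (\ref{20220406}) and in Lemma~\ref{lemma2022033001}. The resolution I would emphasize is that the lemma's bound is uniform over the whole box, so it may be invoked at $x^{j_{t_0},*}$ regardless of which relaxation produced it; the only features of $x^{j_{t_0},*}$ that the argument uses are its membership in $Y^{j_{t_0}}$ and in $\mathcal{A}$, not its optimality. A secondary point requiring care is confirming that $\hat{y}$ is genuinely covered by a terminal box of $\mathcal{L}_{1}$, so that the dominating representative really lies in $\mathcal{A}$, and that the single scalar test $\omega(Y^{j_{t_0}})\leq\widetilde{L}^{j_{t_0}}$ delivers the lemma's hypothesis for all $p$ coordinate objectives at once.
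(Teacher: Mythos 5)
Your proposal is correct, and it reaches the contradiction by a genuinely leaner route than the paper. The skeleton is the same: assume $\hat{x}\in\widetilde{\cX}_{E_{ap}}\setminus\cX_{E}^{\varepsilon}$, produce $\hat{y}$ with $f(\hat{y})+\varepsilon e\preceq f(\hat{x})$, locate $\hat{y}$ in a terminal box of $\mathcal{L}_1$, and show that the stored representative of that box dominates $\hat{x}$. But the paper establishes the domination in two unequal pieces: it invokes the weighted-sum optimality of $\hat{z}=x^{k_t,*}$ to extract one index $j_0$ with $f_{j_0}^{k_t}(\hat{z})\leq f_{j_0}^{k_t}(\hat{y})$ and hence $f_{j_0}(\hat{z})<f_{j_0}(\hat{x})$, and then handles the remaining indices via the \emph{second} assertion of Lemma \ref{lemma2022033001} (the $3\varepsilon/8$ bound, which needs condition (\ref{2022032902})) to get only the non-strict $f_{j}(\hat{z})\leq f_{j}(\hat{x})$. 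You instead apply the \emph{first} assertion of the lemma uniformly in $j$, getting $f_{j}(x^{j_{t_0},*})\leq\nu_{j}+\varepsilon/4\leq f_{j}(\hat{y})+\varepsilon/4\leq f_{j}(\hat{x})-3\varepsilon/4$ for every $j$; this yields the stronger conclusion $f(x^{j_{t_0},*})\prec f(\hat{x})$, uses neither the optimality of the stored point in the scalarized subproblem nor condition (\ref{2022032902}), and correctly anticipates the one delicate point, namely that $\nu_j$ must be formed with the global constant $\widetilde{\alpha}$ (for which the width test (\ref{20220406}), taken as a minimum over $j$, delivers the lemma's hypothesis for all objectives at once). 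What your version buys is the observation that the width test alone already forces the $\varepsilon$-efficiency of the output; what the paper's version buys is an argument that survives even if one only has the weaker per-component oscillation bound $3\varepsilon/8$ on the relaxed functions. The only cosmetic gaps are the strict versus non-strict inequality in the choice of $L_j^{k_t}$ (the lemma asks for $>$, line \ref{line:2} imposes $\geq$) and the unstated but true fact that the terminal boxes in $\mathcal{L}_1$ cover $X$; neither affects validity.
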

\begin{proof}
%We first define $\mathcal{A}:=\bigcup\limits_{k_t=1}^{2^{t_0}}\widetilde{\cX}_{E_{ap}^{k_t}}$.
If this statement would not hold, then there exists $\hat{x}\in \widetilde{\cX}_{E_{ap}}$ such that $\hat{x}\notin \cX_{E}^{\varepsilon}$. Thus, one can find $\hat{y}\in X^{0}$ satisfied $f(\hat{y})+\varepsilon e\preceq f(\hat{x})$. Obviously, $\hat{y}\notin \mathcal{A}$.  Thus we can find $Y^{k_t}\in \mathbf{Y}^{t_{0}}$ such that $\hat{y}\in Y^{k_t}\backslash\widetilde{\cX}_{E_{ap}^{k_t}}$.  This also indicates that there exists $\hat{z}\in \widetilde{\cX}_{E_{ap}^{k_t}}$ since $\widetilde{\cX}_{E_{ap}^{k_t}}\neq \emptyset$, that is, $\hat{z}$ is an optimal solution of subproblem {\rm(\ref{align:1-12-1})} with $\boldsymbol{\lambda}$. Thus, we derive
\begin{align*}
\sum\limits_{j=1}^{p}\lambda_{j}f_{j}^{k_t}(\hat{z})\leq \sum\limits_{j=1}^{p}\lambda_{j}f_{j}^{k_t}(\hat{y})
\end{align*}
Obviously, using $\boldsymbol{\lambda}=(\lambda_{1},\cdots,\lambda_{p})\in \mathbb{R}_{++}^{p}$, one can find $j_0\in[p]$ such that $f_{j_0}^{k_t}(\hat{z})\leq f_{j_0}^{k_t}(\hat{y})$. We work with the definition of $f_{j_{0}}^{k_t}$ and the condition $f(\hat{y})+\varepsilon e\preceq f(\hat{x})$, for $j_{0}$ it leads to the chain of inequalities
\begin{align*}
f_{j_0}(\hat{z})+\frac{\alpha_{j_0}^{k_t}}{2}\sum\limits_{i=1}^{m}(a_{i}^{k_t}-\hat{z}_{i})(b_{i}^{k_t}-\hat{z}_{i})\leq f_{j_{0}}(\hat{x})-\varepsilon.
\end{align*}
Due to the first condition of $(\ref{2022032902})$ and  $\alpha_{j_0}^{k_t}\leq \widetilde{\alpha}$, it follows that
\begin{align}\label{2022033003}
0\leq\frac{\alpha_{j_0}^{k_t}}{2}\sum\limits_{i=1}^{m}(a_{i}^{k_t}-\hat{z}_{i})(b_{i}^{k_t}-\hat{z}_{i})+\frac{\varepsilon}{8}<\frac{\alpha_{j_0}^{k_t}}{2}\sum\limits_{i=1}^{m}(a_{i}^{k_t}-\hat{z}_{i})(b_{i}^{k_t}-\hat{z}_{i})+\varepsilon,
\end{align}
which leads to $f_{j_0}(\hat{z})<f_{j_0}(\hat{x})$.

For any $j\in\{1,\cdots,p\}\setminus \{j_0\}$, based on the second condition of (\ref{2022032902}) and lemma \ref{lemma2022033001}, we can show that
\begin{align*}
|f_{j}^{k_t}(\hat{y})-f_{j}^{k_t}(\hat{z})|\leq |f_{j}^{k_t}(\hat{y})-\nu_{j}^{k_t}| + |f_{j}^{k_t}(\hat{z})-\nu_{j}^{k_t}|\leq \frac{3\varepsilon}{4}
\end{align*}
where $\nu_{j}^{k_t}=\min\limits_{x\in Y^{k_t}}f_{j}^{k_t}(x)$. The above inequality implies that $f_{j}^{k_t}(\hat{z})-\frac{3\varepsilon}{4}\leq f_{j}^{k_t}(\hat{y})$. Using the assumption condition $f(\hat{y})+\varepsilon e\preceq f(\hat{x})$, and the definition of $f_{j}^{k_t}$,  we also obtain
\begin{align}\label{2022033002}
f_{j}(\hat{z})+\frac{\alpha_{j}^{k_t}}{2}\sum\limits_{i=1}^{m}(a_{i}^{k_t}-\hat{z}_i)(b_{i}^{k_t}-\hat{z}_{i})-\frac{3\varepsilon}{4}\leq f(\hat{x})-\varepsilon,
\end{align}
Similar to (\ref{2022033003}), the inequality (\ref{2022033002}) yields to $f_{j}(\hat{z})\leq f_{j}(\hat{x})$ for any $j\in\{1,\dots,p\}\setminus\{j_0\}$. Together with $f_{j_0}(\hat{z})<f_{j_0}(\hat{x})$, we now obtain $f(\hat{z})\preceq f(\hat{x})$ with $\hat{z}\in\widetilde{\cX}_{E_{ap}^{k_t}}\subseteq \mathcal{A}$. This contradicts the assumption that $\hat{x}\in\widetilde{\cX}_{E_{ap}}$, that is, the statement $\widetilde{\cX}_{E_{ap}}\subseteq \cX_{E}^{\varepsilon}$ holds.
\end{proof}

%
%\begin{remark}
%If we approximately solve the subproblem {\rm(\ref{align:1-12-1})} with $2\varepsilon$ in \textbf{Step 3} of  Algorithm \ref{alg:1}, then we can construct $\cX_{E_{ap}}^{2\varepsilon}(t_0)$ by the definition. According to Theorem \ref{theorem:4.8}, it follows that $\cX_{E}^{\varepsilon}\subseteq \cX_{E_{ap}}^{2\varepsilon}(t_0)$. This indicates that from Algorithm \ref{alg:1} we get an approximation of set $\cX_{E}^{\varepsilon}$, that is, $\cX_{E_{ap}}(t_0)\subseteq \cX_{E}^{\varepsilon}\subseteq \cX_{E_{ap}}^{2\varepsilon}(t_0)$. Without loss of generality, we assume that $\hat{x}\in\cX_{E}$
%\end{remark}

\section{Numerical Experiments}
In this section, we demonstrate  the numerical performance
of our Algorithm \ref{alg:1} by some examples, which include the nonconvex and  disjointed Pareto front, respectively. All computations have been performed on a computer with Inter(R) Core(TM)i5-8250U CPU and 8 Gbytes RAM. For all instances, we take $\varepsilon=0.02$ and the weight vector $\boldsymbol{\lambda}=(\frac{1}{p},\cdots,\frac{1}{p})$ where $p$ is the number of objective functions.
\begin{example}\label{example:20220317}
This test instance was proposed in {\cite{niebling2019branch}}:
\begin{equation*}       %开始数学环境
f(x)=\left(                 %左括号
  \begin{array}{c}   %该矩阵一共3列，每一列都居中放置
    x_{1} \\  %第一行元素
    \\
    \dfrac{1}{x_{1}}\Big(2-\exp\Big(-\Big(\frac{x_{2}-0.2}{0.004}\Big)^2\Big)-0.8\exp\Big(-\Big(\frac{x_{2}-0.6}{0.4}\Big)^2\Big)\Big) \\  %第二行元素
  \end{array}
\right)               %右括号
\end{equation*}
with $X=[0.1, 1]\times [0,1]$. As stated in \cite{niebling2019branch}, the globally efficient solutions are $(\tilde{x}_{1}, \tilde{x}_{2})$ with $\tilde{x}_{2}\approx 0.2$ and $\tilde{x}_{1}\in [0.1, 1]$. This example also exists locally efficient solutions with $\tilde{x}_{2}\approx 0.6$ and $\tilde{x}_{1}\in [0.1, 1]$. The graph of $f_2$ and the image space are showed in Figure \ref{example1-image:2022031701}, respectively.
\end{example}
\begin{figure}[H]
\captionsetup{font={scriptsize}}
\subfigure[\scriptsize{The graph of $f_{2}(x)$}]
{
\begin{minipage}[t]{0.5\linewidth}
\includegraphics[width=6.5cm]{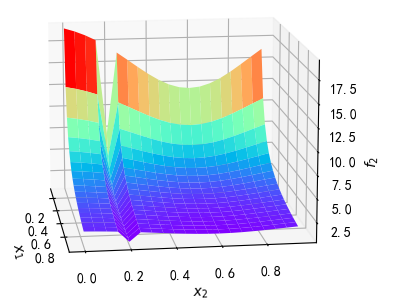}
%\caption{fig1}
\end{minipage}%
}%
\subfigure[\scriptsize{Image space}]{
\begin{minipage}[t]{0.5\linewidth}
\includegraphics[width=7cm]{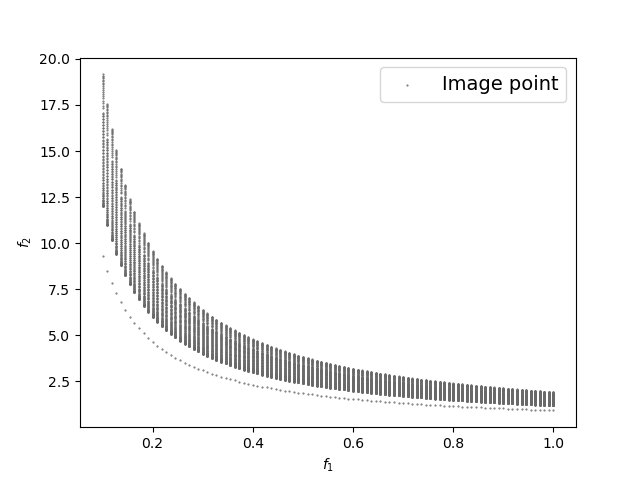}
%\caption{fig2}
\end{minipage}%
}%
\centering
\caption{Example \ref{example:20220317}}\label{example1-image:2022031701}
\end{figure}
In this example, we roughly estimate ${t}_{\varepsilon}=12$ and take $t_{0}=12$. Figure \ref{example1-solution:2022031702} shows the $\varepsilon$-efficient solutions and the non-dominated points of this example.
\begin{figure}[H]
\captionsetup{font={scriptsize}}
\subfigure[\scriptsize{$\varepsilon$-efficient solutions}]
{
\begin{minipage}[t]{0.5\linewidth}
\includegraphics[width=7cm]{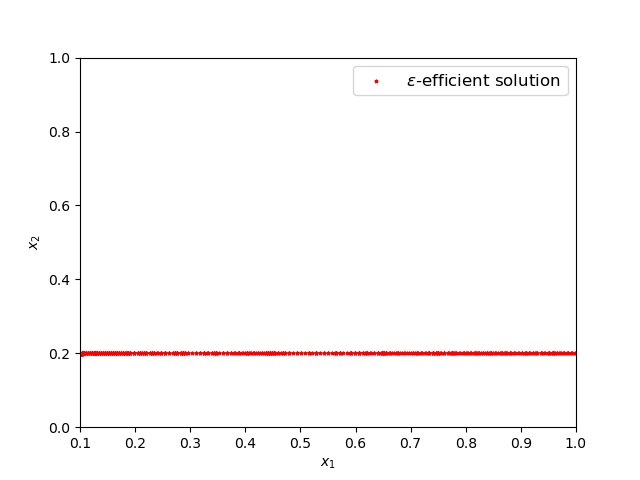}
%\caption{fig1}
\end{minipage}%
}%
\subfigure[\scriptsize{$\varepsilon$-nondominated points}]{
\begin{minipage}[t]{0.5\linewidth}
\includegraphics[width=7cm]{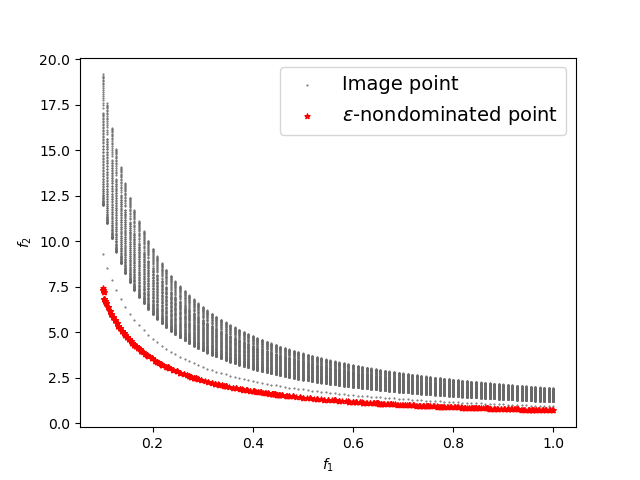}
%\caption{fig2}
\end{minipage}%
}%
\centering
\caption{$\varepsilon$-efficient solutions and the non-dominated points of Example \ref{example:20220317}}\label{example1-solution:2022031702}
\end{figure}
Figure \ref{example1-solution:2022031702} (a) and (b) show performance in terms of how well the globally efficient solutions  are found and the Pareto front is approximated, respectively,  by Algorithm \ref{alg:1}. In other words, the points generated by Algorithm \ref{alg:1} are globally approximate solutions, not locally ones.

\begin{example}\label{example:2022031702}
This test instance is based on {\cite{niebling2019branch}}:
\begin{equation*}       %开始数学环境
f(x)=\left(                 %左括号
  \begin{array}{c}   %该矩阵一共3列，每一列都居中放置
    1 - \exp\Big(-\sum\limits_{i=1}^{3}\Big(x_{i}-\frac{1}{\sqrt{3}}\Big)^2\Big)  \\  %第一行元素
    \\
    1 - \exp\Big(-\sum\limits_{i=1}^{3}\Big(x_{i}+\frac{1}{\sqrt{3}}\Big)^2\Big) \\  %第二行元素
  \end{array}
\right) %右括号
with ~
X=\begin{bmatrix}
\left(                 %左括号
  \begin{array}{c}   %该矩阵一共3列，每一列都居中放置
    -2\\  %第一行元素
    -2\\
    -2\\  %第二行元素
  \end{array}
\right),
&
\left(                 %左括号
  \begin{array}{c}   %该矩阵一共3列，每一列都居中放置
    2\\  %第一行元素
    2\\
    2\\  %第二行元素
  \end{array}
\right)
\end{bmatrix}.
\end{equation*}
\end{example}
Similarly, by roughly estimating $t_{\varepsilon}=16$. Thus, let $t_{0}=16$. The results of $\varepsilon$-efficient solutions and non-dominated points  as illustrated in Figure \ref{example2-results:1-0}, respectively. Obviously, the shape of the approximate Pareto front can be characterized by this piecewise convexification algorithm from it.
\begin{figure}[h]
\captionsetup{font={scriptsize}}
\subfigtopskip=2pt %设置子图与上面正文或别的内容的距离
\subfigbottomskip=0.0cm %设置第二行子图与第一行子图的距离，即下面的头与上面的脚的距离
\subfigcapskip=0pt %设置子图与子标题之间的距离
\subfigure[\scriptsize{$\varepsilon$-efficient solutions}]
{
\begin{minipage}[t]{0.5\linewidth}
\includegraphics[width=6.5cm]{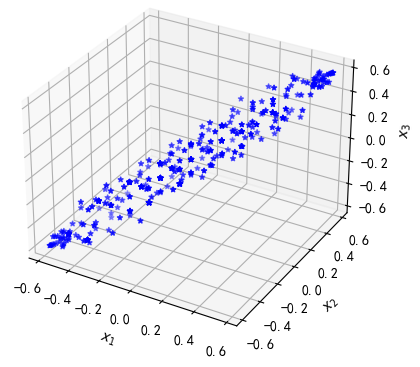}
%\caption{fig1}
\end{minipage}%
}%
\subfigure[\scriptsize{$\varepsilon$-nondominated points}]{
\begin{minipage}[t]{0.5\linewidth}
\includegraphics[width=7cm]{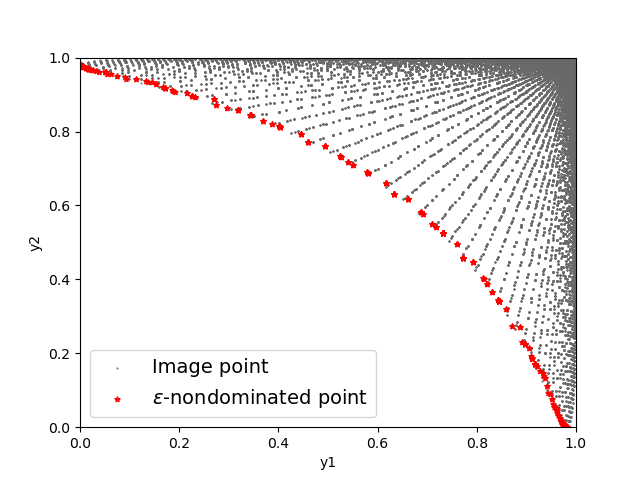}
%\caption{fig2}
\end{minipage}%
}%
\centering
\caption{The $\varepsilon$-efficient solutions and non-dominated points of Example \ref{example:2022031702}}\label{example2-results:1-0}\vspace*{-0.4cm}
\end{figure}

\begin{example}\label{example:4}
This example is considered in {\cite{zitzler2000comparison}}:
\begin{equation*}       %开始数学环境
f(x)=\left(                 %左括号
  \begin{array}{c}   %该矩阵一共3列，每一列都居中放置
    x_{1} \\  %第一行元素
    \\
    1+9x_{2}-(x_{1}(1+9x_{2}))^{\frac{1}{2}}-x_{1}\sin(10\pi x_{1}) \\  %第二行元素
  \end{array}
\right)
with ~
X=\begin{bmatrix}
\left(                 %左括号
  \begin{array}{c}   %该矩阵一共3列，每一列都居中放置
    0.1\\  %第一行元素
    0\\
  \end{array}
\right),
&
\left(                 %左括号
  \begin{array}{c}   %该矩阵一共3列，每一列都居中放置
    1\\  %第一行元素
    1\\
  \end{array}
\right)
\end{bmatrix}.               %右括号
\end{equation*}
The $f_{2}=1-\sqrt{f_{1}}-f_{1}\sin(10\pi f_{1})$ that consists of several noncontiguous convex parts is Pareto front. Figure \ref{example3:3-0} shows the graph of $f_2$ and the image space, respectively.\end{example}
\begin{figure}[H]
\captionsetup{font={scriptsize}}
\subfigure[\scriptsize{The graph of $f_{2}(x)$}]
{
\begin{minipage}[t]{0.5\linewidth}\hspace*{-0.4cm}
\includegraphics[width=7cm]{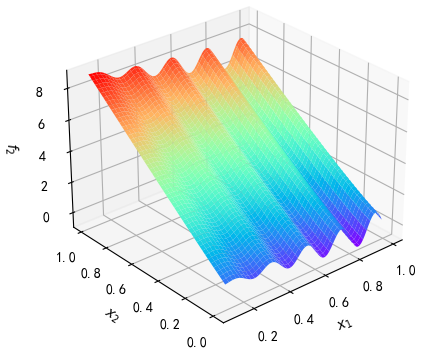}
%\caption{fig1}
\end{minipage}%
}%
\subfigure[\scriptsize{Image space}]{
\begin{minipage}[t]{0.5\linewidth}
\includegraphics[width=7cm]{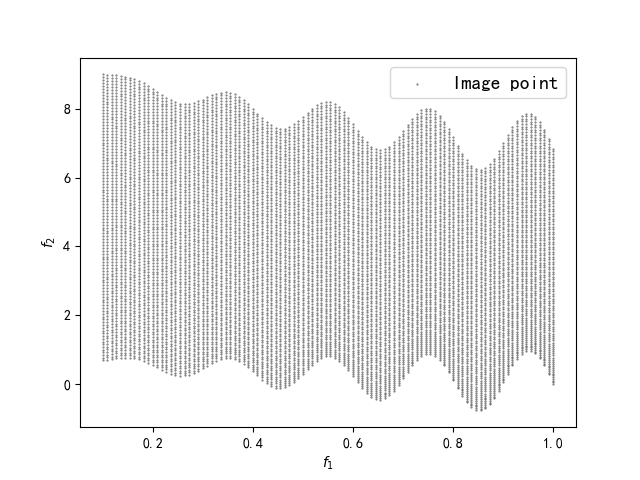}
%\caption{fig2}
\end{minipage}%
}%
\centering
\caption{Example \ref{example:4}}\label{example3:3-0}
\end{figure}
%\subfigure[\scriptsize{Pareto-optimal front}]{
%\begin{minipage}[t]{0.29\linewidth}
%\includegraphics[width=3.9cm]{example3-pareto-front.jpg}
%%\caption{fig2}
%\end{minipage}
%}%
%In the same way, we roughly estimate ${t}_{\varepsilon}=11$ and also analysis the influences of $t_0$, which are shown in Figure \ref{fig:3-1}.
\begin{figure}[h]
\captionsetup{font={scriptsize}}
\centering
\subfigure[$t_0$ = 13]{
\begin{minipage}[t]{0.5\linewidth}
\includegraphics[width=6.4cm]{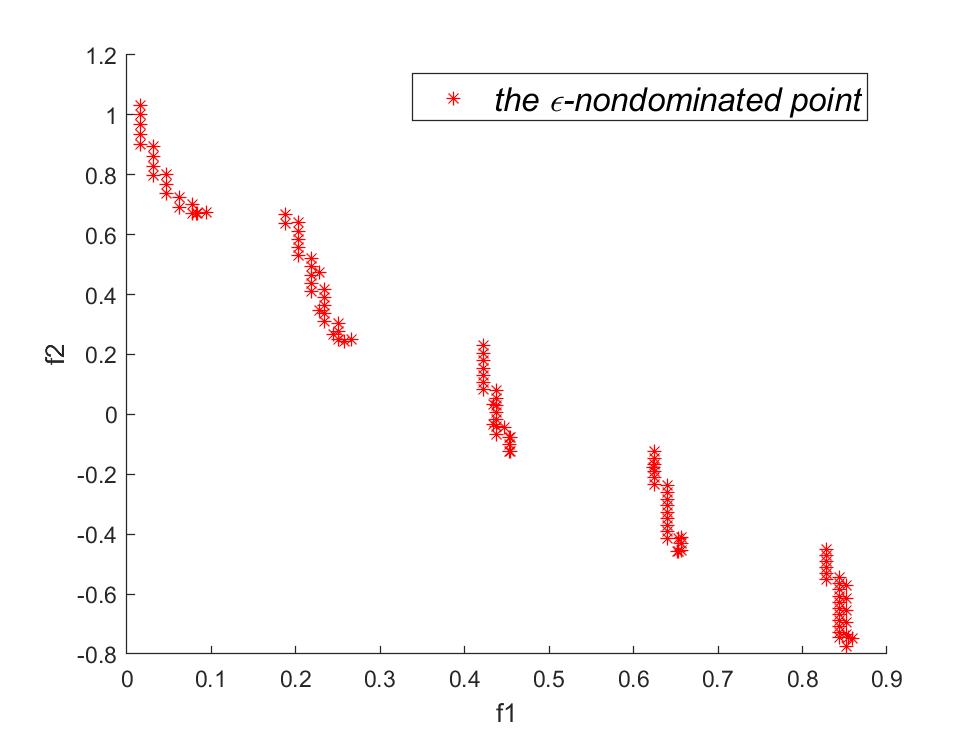}
%\caption{fig2}
\end{minipage}%
}%
\subfigure[$t_0$ = 15]{
\begin{minipage}[t]{0.5\linewidth}
\includegraphics[width=6.4cm]{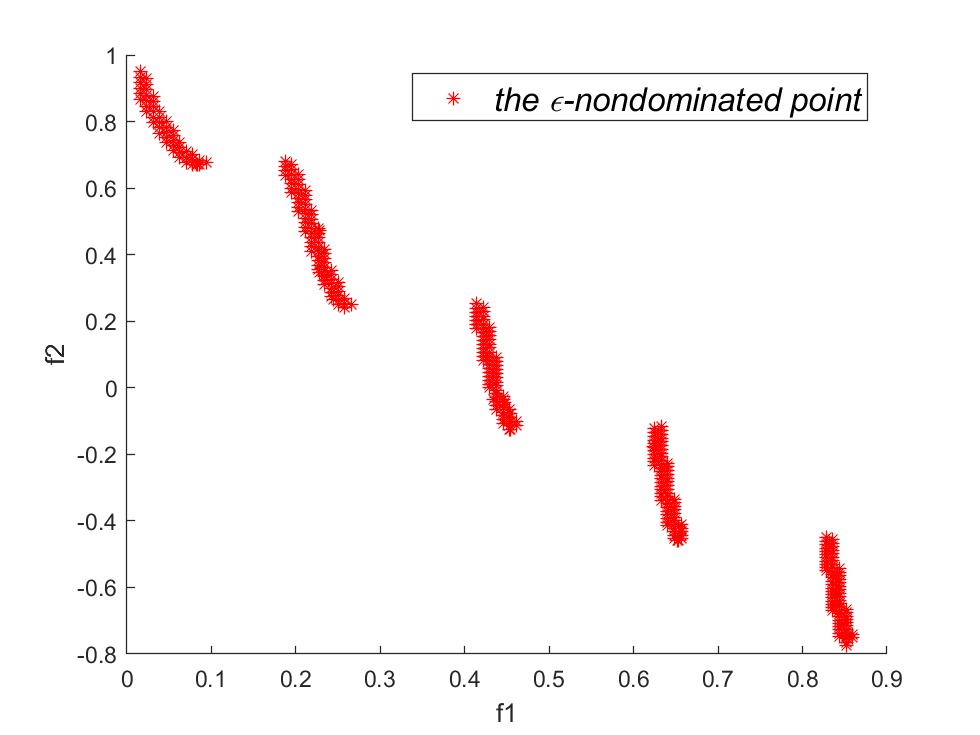}
%\caption{fig2}
\end{minipage}
}%
\caption{The solutions of the convexification method with different $t_0$}\label{fig:3-1}
\end{figure}
In the same way, we roughly estimate $t_{\varepsilon}= 12$ and analysis the influences of $t_0$, which are shown
in Figure \ref{fig:3-1}. It is apparent from Figure \ref{fig:3-1} that these non-dominated points are almost uniformly distributed with increasing the subdivision times, that is, $t_{0}=13$ to $t_{0}=15$. More importantly, Figure \ref{fig:3-1} indicates that Algorithm \ref{alg:1} is also suitable for deal with this complex multi-objective optimization problems.

Further, we verify that when the number of objectives is four, the piecewise convexification method also has practical operability.
%\begin{example}{ \cite{li2008multiobjective}}\label{example:6} Let $x_{i}\in [0,1], i=1,2$, $x_{3}\in [-2, 2]$ and
%\begin{align*}
%&f_{1}(x)=\cos(0.5x_{1}\pi)\cos(0.5x_{2}\pi),\\
%&f_{2}(x)=\cos(0.5x_{1}\pi)\sin(0.5x_{2}\pi),\\
%&f_{3}(x)=\sin(0.5x_{1}\pi)+2(x_{3}-2x_{2}\sin(2\pi x_{1}+\pi))^2.
%\end{align*}
%Its Pareto set is $x_{3}=2x_{2}\sin(2\pi x_{1}+\pi)$.\end{example}
\begin{example}\label{2022032050}
A multi-objective rocket injector design problem was studied in \cite{burachik2017new} as follows:
\begin{align*}
\min~[f_{1}, f_{2}, f_{3}, f_{4}]~~ \text{subject~to~} 0\leq x_{1}, x_{2}, x_{3},x_{4}\leq 1
\end{align*}
where
\begin{align*}
f_{1} =~ &0.692 + 0.477x_{1}- 0.678x_{4}-0.08x_{3}-0.065x_{2}-0.167x_{1}^{2} - 0.0129x_{1}x_{4}\\
        &+ 0.0796x_{4}^{2} - 0.0634x_{1}x_{3} - 0.0257x_{3}x_{4} + 0.0877x_{3}^{2} - 0.0521x_{1}x_{2}\\
        &+ 0.00156x_{2}x_{4} + 0.00198x_{2}x_{3} + 0.0184x_{2}^{2},\\
f_{2} =~ & 0.37 - 0.205x_{1} + 0.0307x_{4} + 0.108x_{3} + 1.019x_{2}-0.135x_{1}^{2} + 0.0141x_{1}x_{4}\\
        & + 0.0998x_{4}^{2} + 0.208x_{1}x_{3} - 0.0301x_{3}x_{4} - 0.226x_{3}^{2} + 0.353x_{1}x_{2} - 0.0497x_{2}x_{3}\\
        & - 0.423x_{2}^{2} + 0.202x_{1}^{2}x_{4} - 0.281x_{1}^{2}x_{3} - 0.342x_{1}x_{4}^{2} -0.245x_{3}x_{4}^{2} + 0.281x_{3}^{2}x_{4}\\
        & -0.184x_{1}x_{2}^{2} + 0.281x_{1}x_{3}x_{4},\\
f_{3} = ~& 0.153 - 0.322x_{1} + 0.396x_{4} + 0.424x_{3} + 0.0226x_{2} + 0.175x_{1}^{2} + 0.0185x_{1}x_{4}\\
        & - 0.0701x_{4}^{2} - 0.251x_{1}x_{3} + 0.179x_{3}x_{4} + 0.015x_{3}^{2} + 0.0134x_{1}x_{2} + 0.0296x_{2}x_{4}\\
        & + 0.0752x_{2}x_{3} + 0.0192x_{2}^{2},\\
f_{4} = ~&0.758 + 0.358x_{1} - 0.807x_{4} + 0.0925x_{3} - 0.0468x_{2} - 0.172x_{1}^{2} + 0.0106x_{1}x_{4}\\
         & + 0.0697x_{4}^{2} - 0.146x_{1}x_{3} - 0.0416x_{3}x_{4} + 0.102x_{3}^{2} - 0.0694x_{1}x_{2}\\
         & - 0.00503x_{2}x_{4} + 0.0151x_{2}x_{3} + 0.0173x_{2}^{2}.
\end{align*}
\end{example}
Similarly, $t_{0}=12$ satisfies the condition, then taking $t_{0}=13$. Analogously to \cite{burachik2017new}, we display the projection $(f_{1},f_{2},f_{3},f_{4})$ in $f_{1}f_{2}f_{3}$-space and $f_{2}f_{3}f_{4}$-space in  Figure \ref{fig:2022032001}.
\begin{figure}[H]
\captionsetup{font={scriptsize}}
\subfigure[\scriptsize{The Pareto front to the $f_1f_{2}f_{3}$-space}]
{
\begin{minipage}[t]{0.5\linewidth}\hspace*{-0.3cm}
\includegraphics[width=6.5cm]{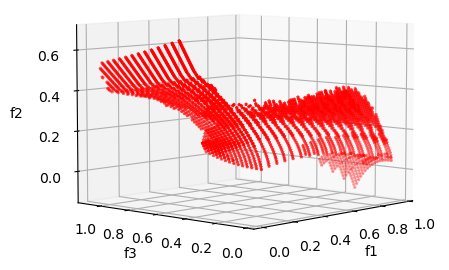}
%\caption{fig1}
\end{minipage}%
}%
\subfigure[\scriptsize{The Pareto front to the $f_2f_{3}f_{4}$-space}]{
\begin{minipage}[t]{0.5\linewidth}
\includegraphics[width=6.0cm]{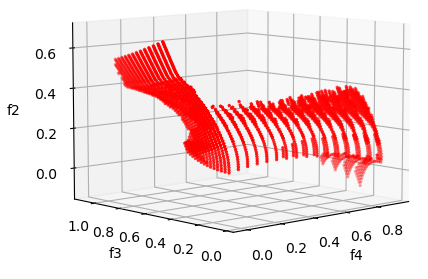}
%\caption{fig2}
\end{minipage}%
}%
\caption{Projected Pareto fronts for Example \ref{2022032050}}\label{fig:2022032001}
\end{figure}

\section{Conclusions}\label{secremark}
In this paper, we employ the  ${\rm \alpha BB}$ method and the interval division technique to obtain a series of convex relaxation sub-multiobjective problems, which is regarded as the piecewise convexification problem of MOP on the whole set $X$.  Furthermore, we establish the (approximate, weakly) efficient solution set of this piecewise convexification problem and investigate that the globally approximate (weakly) efficient solution set of MOP can be approximated by combining with two (approximate) solution sets of the piecewise convexification problem, that is, $\cX_{wE_{ap}}^{(\varepsilon-\eta)}(t)\subset\cX_{wE}^{\varepsilon}\subset \cX_{wE_{ap}}^{(\varepsilon+\eta)}(t)$ or $\cX_{E_{ap}}^{(\varepsilon-\eta)}(t)\subset\cX_{E}^{\varepsilon}\subset \cX_{E_{ap}}^{(\varepsilon+\eta)}(t)$. Although these sets cannot be calculated numerically, the theoretical
results are still nice and well-structured. In order to calculate, we apply two condition about the width of a box to design a piecewise convexification algorithm. This algorithm also yields a subset of the globally approximated efficient solution set of MOP by only computing a finite subset of the efficient solution set of multi-objective subproblem on every sub-box.

Different from the traditional $\rm{\alpha BB}$ method, we pay more attention to establishing the theoretical properties of the piecewise convexification method itself in this paper, rather than emphasizing the design of the algorithm.
Therefore, in numerical experiments, we only verified the feasibility of the piecewise convexification algorithm and did not demonstrate its effectiveness. In fact, this method with well theoretical properties deserves our further study and this article can be seen as the first of a series of papers about this piecewise convexification method for us.
%Of course, this method still has many areas for improvement, for example, we do not consider the relationship between solution sets under different subdivisions, as well as we do not propose the new strategies of subdividing and discarding for subintervals in the piecewise convexification algorithm.
In our future research work,  we will redefine the solution set of the piecewise convexification problem, which takes advantage of the  previous division information. At the same time, the evolutionary algorithm and a new  subdivision strategy will be incorporated into the piecewise convexification method to quickly delete boxes and speed up the algorithm. Moreover, a new termination condition will be added to the algorithm to avoid calculating the minimum number of divisions. These researches are underway and some interesting results would be obtained.

\hbox to14cm{\hrulefill}\par
\noindent\textsc{Qiao Zhu}\\
{{\footnotesize
{College of Mathematics, Sichuan University, 610065, Chengdu Sichuan, China}\\
 {E-mail address: math\_qiaozhu@163.com}\\
{\scshape Liping Tang}\\
{\footnotesize
{National Center for Applied Mathematics, Chongqing Normal University, 401331 Chongqing, China.}\\
 { Email address: tanglipings@163.com}}
\medskip}\\
{\scshape Xinmin Yang}\\
{\footnotesize
{National Center for Applied Mathematics, Chongqing Normal University, 401331 Chongqing, China. }\\
 { Email: xmyang@cqnu.edu.cn}}\\

\end{document}